\begin{document}
%
%  Types of ``theorems''
%
\theoremstyle{plain}
\swapnumbers
    \newtheorem{thm}{Theorem}[section]
    \newtheorem{prop}[thm]{Proposition}
    \newtheorem{lemma}[thm]{Lemma}
    \newtheorem{cor}[thm]{Corollary}
    \newtheorem{subsec}[thm]{}
    \newtheorem*{thma}{Theorem A}
    \newtheorem*{thmb}{Theorem B}
    \newtheorem*{thmc}{Theorem C}
\theoremstyle{definition}
    \newtheorem{defn}[thm]{Definition}
    \newtheorem{example}[thm]{Example}
    \newtheorem{notn}[thm]{Notation}
\theoremstyle{remark}
    \newtheorem{remark}[thm]{Remark}
    \newtheorem{ack}[thm]{Acknowledgements}
%
%   Special diagram/display environments
%
\newenvironment{myeq}[1][]
{\stepcounter{thm}\begin{equation}\tag{\thethm}{#1}}
{\end{equation}}
\newcommand{\mydiag}[2][]{\myeq[#1]\xymatrix{#2}}
\newcommand{\mydiagram}[2][]
{\stepcounter{thm}\begin{equation}
     \tag{\thethm}{#1}\vcenter{\xymatrix{#2}}\end{equation}}
%use:  \mydiagram[\label{``label''}]{``xy-pic syntax''}
%
\newenvironment{mysubsection}[2][]
{\begin{subsec}\begin{upshape}\begin{bfseries}{#2.}
\end{bfseries}{#1}}
{\end{upshape}\end{subsec}}
\newenvironment{mysubsect}[2][]
{\begin{subsec}\begin{upshape}\begin{bfseries}{#2\vsn.}
\end{bfseries}{#1}}
{\end{upshape}\end{subsec}}
\newcommand{\sect}{\setcounter{thm}{0}\section}
\newcommand{\wh}{\ -- \ }
\newcommand{\wwh}{-- \ }
\newcommand{\w}[2][ ]{\ \ensuremath{#2}{#1}\ }
\newcommand{\ww}[1]{\ \ensuremath{#1}}
\newcommand{\www}[2][ ]{\ensuremath{#2}{#1}\ }
\newcommand{\wb}[2][ ]{\ (\ensuremath{#2}){#1}\ }
\newcommand{\wwb}[1]{\ (\ensuremath{#1})-}
\newcommand{\wref}[2][ ]{\ \eqref{#2}{#1}\ }
\newcommand{\wwref}[1]{\ \eqref{#1}}
%
%             arrows
%
\newcommand{\xra}[1]{\xrightarrow{#1}}
\newcommand{\xla}[1]{\xleftarrow{#1}}
\newcommand{\larr}{\leftarrow}
\newcommand{\xsim}{\xrightarrow{\sim}}
\newcommand{\hra}{\hookrightarrow}
\newcommand{\llra}{\longrightarrow}
\newcommand{\epic}{\to\hspace{-5 mm}\to}
\newcommand{\adj}[2]{\substack{{#1}\\ \rightleftharpoons \\ {#2}}}
\newcommand{\ccsub}[1]{\circ_{#1}}
\newcommand{\DEF}{:=}
\newcommand{\EQUIV}{\Leftrightarrow}
\newcommand{\hsp}{\hspace{10 mm}}
\newcommand{\hs}{\hspace*{6 mm}}
\newcommand{\hsm}{\hspace*{3 mm}}
\newcommand{\hso}{\hspace*{2 mm}}
\newcommand{\vsm}{\vspace{3 mm}}
\newcommand{\vsn}{\vspace{1 mm}}
\newcommand{\vs}{\vspace{4 mm}}
\newcommand{\rest}[1]{\lvert_{#1}}
\newcommand{\lra}[1]{\langle{#1}\rangle}
\newcommand{\lin}[1]{\{{#1}\}}
\newcommand{\llrr}[1]{\langle\!\langle{#1}\rangle\!\rangle}
\newcommand{\sss}{\hspace*{1 mm}\sp{s}}
\newcommand{\bstar}{\mbox{\large $\star$}}
\newcommand{\q}[1]{^{({#1})}}
\newcommand{\li}[1]{_{({#1})}}
%
%             Calligraphic
%
\newcommand{\A}{{\EuScript A}}
\newcommand{\Aj}[1]{A\li{#1}}
\newcommand{\bA}{\bar{A}}
\newcommand{\bAj}[1]{\bA\li{#1}}
\newcommand{\B}{{\EuScript B}}
\newcommand{\C}{{\mathcal C}}
\newcommand{\sC}{\sss\C}
\newcommand{\CsA}{\C\sb{A}}
\newcommand{\hCsA}{\widehat{\C}\sb{A}}
\newcommand{\CasA}[1]{\C\sp{#1}\sb{A}}
\newcommand{\CesA}{\CasA{\lambda}}
\newcommand{\hCasA}[1]{\widehat{\C}\sp{#1}\sb{A}}
\newcommand{\hCesA}{\hCasA{\lambda}}
\newcommand{\hCesX}{\widehat{\C}\sp{e}\sb{X}}
\newcommand{\CX}{\C\sb{X}}
\newcommand{\hCesY}{\widehat{\C}\sp{e}\sb{Y}}
\newcommand{\CY}{\C\sb{Y}}
\newcommand{\hCesZ}{\widehat{\C}\sp{e}\sb{Z}}
\newcommand{\CZ}{\C\sb{Z}}
\newcommand{\tCasA}[1]{\widetilde{\C}\sp{#1}\sb{A}}
\newcommand{\tCesA}{\tCaSA{\lambda}}
\newcommand{\CuA}{\C\sp{A}}
\newcommand{\D}{{\mathcal D}}
\newcommand{\E}{{\EuScript E}}
\newcommand{\F}{{\EuScript F}}
\newcommand{\cF}{{\mathcal F}}
\newcommand{\Fp}{\F'}
\newcommand{\FA}{F_{A}}
\newcommand{\G}{{\mathcal G}}
\newcommand{\II}{{\mathcal I}}
\newcommand{\K}{{\mathcal K}}
\newcommand{\tK}{\tilde{K}}
\newcommand{\LL}{{\mathcal L}}
\newcommand{\tLA}{\tilde{L}_{A}}
\newcommand{\N}{{\EuScript N}}
\newcommand{\OO}{{\EuScript O}}
\newcommand{\PP}{{\mathcal P}}
\newcommand{\Ss}{{\mathcal S}}
\newcommand{\Sa}{\Ss\sb{\ast}}
\newcommand{\Sk}{\Ss\sb{\ast}\sp{\Kan}}
\newcommand{\TT}{{\mathcal T}}
\newcommand{\Ta}{\TT\sb{\ast}}
\newcommand{\V}{{\mathcal V}}
%
%             categories and functors
%
\newcommand{\hy}[2]{{#1}\text{-}{#2}}
%
%    Names of categories and functors
%
\newcommand{\Alg}[1]{{#1}\text{-}{\EuScript Alg}}
\newcommand{\Ab}{{\EuScript Ab}}
\newcommand{\Abgp}{{\Ab\Grp}}
\newcommand{\Cat}{{\EuScript Cat}}
\newcommand{\Grp}{{\EuScript Gp}}
\newcommand{\Gpd}{{\EuScript Gpd}}
\newcommand{\Set}{{\EuScript Set}}
\newcommand{\Sets}{\Set_{\ast}}
\newcommand{\VC}{\hy{\V}{\Cat}}
%
%             Enriched categories
%
\newcommand{\GO}{(\Gpd,\OO)}
\newcommand{\GOC}{\hy{\GO}{\Cat}}
\newcommand{\GOp}{(\Gpd,\Op)}
\newcommand{\GOpC}{\hy{\GOp}{\Cat}}
\newcommand{\OC}{\hy{\OO}{\Cat}}
\newcommand{\SO}{(\Ss,\OO)}
\newcommand{\SaO}{(\Sa,\OO)}
\newcommand{\SOC}{\hy{\SO}{\Cat}}
\newcommand{\SaOC}{\hy{\SaO}{\Cat}}
\newcommand{\SOp}{(\Ss,\Op)}
\newcommand{\SOpC}{\hy{\SOp}{\Cat}}
\newcommand{\SaOp}{(\Sa,\Op)}
\newcommand{\SaOpC}{\hy{\SaOp}{\Cat}}
\newcommand{\VO}{(\V,\OO)}
\newcommand{\VOC}{\hy{\VO}{\Cat}}
%
%            B bold face
%
\newcommand{\FFp}{{\mathbb F}\sb{p}}
\newcommand{\bQ}{{\mathbb Q}}
\newcommand{\bR}{{\mathbb R}}
\newcommand{\bZ}{{\mathbb Z}}
\newcommand{\Fn}{\{K(\FFp,n)\}_{n=1}^{\infty}}
%
%          operators
%
\newcommand{\ab}{\sb{\operatorname{ab}}}
\newcommand{\AQ}{\sb{\operatorname{AQ}}}
\newcommand{\Coeq}{\operatorname{Coeq}}
\newcommand{\CW}{\operatorname{CW}}
\newcommand{\CWA}[1]{\CW\hspace{-1.5mm}\sb{A}{#1}}
\newcommand{\colim}{\operatorname{colim}}
\newcommand{\seg}[1]{\operatorname{Seg}[{#1}]}
\newcommand{\eval}{\operatorname{eval}}
\newcommand{\hocolim}{\operatorname{hocolim}}
\newcommand{\ho}{\operatorname{ho}}
\newcommand{\holim}{\operatorname{holim}}
\newcommand{\Hom}{\operatorname{Hom}}
\newcommand{\uHom}{\underline{\Hom}}
\newcommand{\Id}{\operatorname{Id}}
\newcommand{\inc}{\operatorname{inc}}
\newcommand{\Kan}{\operatorname{Kan}}
\newcommand{\Ker}{\operatorname{Ker}}
\newcommand{\Obj}{\operatorname{Obj}\,}
\newcommand{\op}{\sp{\operatorname{op}}}
\newcommand{\St}{\operatorname{St}}
%
%            Mapping spaces and algebras
%
\newcommand{\map}{\operatorname{map}}
\newcommand{\mapp}{\map\!}
\newcommand{\mapa}{\map_{\ast}}
\newcommand{\mape}[2]{\map\sp{#1}\sb{#2}}
%
%        spaces and maps
%
\newcommand{\bDelta}{\mathbf{\Delta}}
\newcommand{\var}{\varepsilon}
\newcommand{\hvar}{\widehat{\var}}
\newcommand{\tvar}{\widetilde{\var}}
\newcommand{\hee}[1]{\widehat{h}\sp{\lambda}\sb{#1}}
\newcommand{\tet}[1]{\widetilde{h}\sb{#1}}
\newcommand{\bS}[1]{\mathbf{S}^{#1}}
\newcommand{\qe}[1]{e^{(#1)}}
\newcommand{\fff}{\mathfrak{f}}
\newcommand{\hff}{\hat{\fff}}
\newcommand{\qf}[1]{\fff\q{#1}}
\newcommand{\tqf}[1]{\tilde{\fff}\q{#1}}
\newcommand{\wqf}[1]{\widetilde{\qf{#1}}}
\newcommand{\qi}[1]{i^{(#1)}}
\newcommand{\tqi}[1]{\widetilde{\imath}\q{#1}}
\newcommand{\hqi}[1]{{\widehat{\imath}}\q{#1}}
\newcommand{\qp}[1]{p^{(#1)}}
\newcommand{\tqp}[1]{\tilde{p}\q{#1}}
\newcommand{\hr}{\widehat{\rho}}
\newcommand{\qs}[1]{s\sp{(#1)}}
\newcommand{\tqs}[1]{\tilde{s}\q{#1}}
\newcommand{\qY}[1]{Y^{(#1)}}
\newcommand{\hqY}[1]{\widehat{Y}\q{#1}}
\newcommand{\tqY}[1]{\widetilde{Y}\q{#1}}
\newcommand{\qZ}[1]{Z\q{#1}}
\newcommand{\qF}[1]{\F\q{#1}}
%
%    simplicial objects and categories
%
\newcommand{\Fd}{\cF_{\bullet}}
\newcommand{\Md}{M_{\bullet}}
\newcommand{\Vd}{V_{\bullet}}
\newcommand{\Wd}{W_{\bullet}}
\newcommand{\Xd}{X_{\bullet}}
%
%         Mapping algebras
%
\newcommand{\TA}{T_{A}}
\newcommand{\MA}{M_{A}}
\newcommand{\ma}[1][ ]{mapping algebra{#1}}
\newcommand{\Ama}[1][ ]{$A$-mapping algebra{#1}}
\newcommand{\Fma}{$\F$-\ma}
\newcommand{\qFma}[1]{\ww{\qF{#1}}-mapping algebra}
\newcommand{\Bma}[1][ ]{$\B$-mapping algebra{#1}}
\newcommand{\Tal}{$\bT$-algebra}
\newcommand{\bT}{\mathbf{\Theta}}
\newcommand{\TsA}{\bT\sb{A}}
\newcommand{\Tss}[2]{\bT\sp{#1}\sb{#2}}
\newcommand{\TssA}{\Tss{\St}{A}}
\newcommand{\hTsA}{\hat{\bT}\sb{A}}
\newcommand{\ThesA}{\bT\sp{e}\sb{A}}
\newcommand{\TisA}{\Tss{\II}{A}}
\newcommand{\TesA}{\qF{\lambda}}
\newcommand{\TsF}{\bT\sb{\F}}
\newcommand{\TuFp}{\bT\sp{\Fp}}
\newcommand{\M}{{\EuScript Map}}
\newcommand{\MsA}{\M\sb{A}}
\newcommand{\MsAI}{\M\sb{A}\sp{\II}}
\newcommand{\MsAs}{\M\sb{A}\sp{\St}}
\newcommand{\MsAd}{\MsA\sp{\delta}}
\newcommand{\MsAsd}{\MsA\sp{\St,\delta}}
\newcommand{\MsAa}[1]{\widehat{\M\sb{A}\sp{#1}}}
\newcommand{\MsAe}{\MsAa{\lambda}}
\newcommand{\MsF}{\M\sb{\F}}
\newcommand{\MsqF}[1]{\M\sp{#1}}
\newcommand{\MuFp}{\M\sp{\Fp}}
\newcommand{\Ld}{L^{\delta}}
\newcommand{\LAd}{L_{A}^{\delta}}
\newcommand{\wLA}[1]{\widetilde{L_{A}{#1}}}
%
%         Names for mapping algebras
%
\newcommand{\fD}[1]{\mathfrak{D}\sp{e}\!{#1}}
\newcommand{\fL}{\mathfrak{L}}
\newcommand{\fLA}{\fL\sb{A}}
\newcommand{\fM}{\mathfrak{M}}
\newcommand{\fMA}{\fM\sb{A}}
\newcommand{\fMAd}{\fM\sb{A}\sp{\delta}}
\newcommand{\fMAs}{\fM\sb{A}\sp{\St}}
\newcommand{\fMAsd}{\fM\sb{A}\sp{\St,\delta}}
\newcommand{\fMAa}[1]{\widehat{\fM}\sb{A}\sp{#1}}
\newcommand{\fMAe}{\fMAa{\lambda}}
\newcommand{\fMF}{\fM\sb{\F}}
\newcommand{\fMqF}[1]{\fM\sp{#1}\sb{A}}
\newcommand{\fuMFp}{\fM\sp{\Fp}}
\newcommand{\fT}{\mathfrak{T}}
\newcommand{\fTA}{\fT_{A}}
\newcommand{\fTAd}{\fTA\sp{\delta}}
\newcommand{\fTAsd}{\fTA\sp{\St,\delta}}
\newcommand{\fTAe}{\hat{\fT}\sb{A}\sp{\lambda}}
\newcommand{\fTqF}[1]{\widetilde{\fT}\sp{#1}}
\newcommand{\fTF}{\fT_{\F}}
\newcommand{\fV}{\mathfrak{V}}
\newcommand{\fVd}{\fV_{\bullet}}
\newcommand{\fW}{\mathfrak{W}}
\newcommand{\fWd}{\fW_{\bullet}}
\newcommand{\fX}{\mathfrak{X}}
\newcommand{\tX}{\tilde{\fX}}
\newcommand{\hfX}{\hat{\fX}}
\newcommand{\tfX}{\tilde{\fX}}
\newcommand{\fY}{\mathfrak{Y}}
\newcommand{\hfY}{\hat{\fY}}
\newcommand{\tfY}{\tilde{\fY}}
\newcommand{\fZ}{\mathfrak{Z}}
%
%            Title page
%
\title{Recognizing mapping spaces}
%
%Author information
%
\author [B.~Badzioch]{Bernard Badzioch}
\address{Dept.\ of Mathematics\\ University at Buffalo, SUNY\\ 
Buffalo, NY 14260\\ USA }
\author[D.~Blanc]{David Blanc}
\address{Dept.\ of Mathematics\\ University of Haifa\\ 31905 Haifa\\ Israel}
\author[W.~Dorabia{\l}a]{Wojciech Dorabia{\l}a}
\address{Dept.\ of Mathematics\\  Penn State Altoona\\ Altoona,
PA 16601\\ USA}
\date{\today}
\subjclass{Primary: 55Q35; \ secondary: 55N99, 55S20, 18G55}
\keywords{Mapping spaces, mapping algebras, loop spaces, classifying space}

\begin{abstract}
Given a fixed object $A$ in a suitable pointed simplicial model category $\C$, we study the problem of recovering the target $Y$ from the pointed mapping space \w{\mapa(A,Y)} (up to $A$-equivalence). We describe a recognition principle, modelled on the classical ones for loop spaces, but using the more general notion of an \emph{\Ama[.]} It has an associated transfinite procedure for recovering \w{\CWA Y} from \w[,]{\mapa(A,Y)} inspired by Dror-Farjoun's construction of \ww{\CWA{}}-approximations.
\end{abstract}
\maketitle

\setcounter{section}{0}

%
%c0   Introduction
%
\section*{Introduction}
\label{cint}

Given two topological spaces $A$ and $Y$, the set \w{\Hom_{\TT}(A,Y)} of
all continuous maps from $A$ to $Y$ can be endowed with the compact-open
topology, forming the space \w{\uHom(A,Y)} of maps from $A$ to $Y$. Up to homotopy, this may be replaced by the simplicial set \w{\mapp(A,Y)}
(with \w[,]{\mapp(A,Y)_{n}:=\Hom_{\TT}(A\otimes\Delta[n],Y)}
where \w{\Delta[n]} is the standard simplicial $n$-simplex).
Mapping spaces, pointed and unpointed, play a central role in
homotopy theory, and two basic questions arising in their study are:

\begin{enumerate}
\renewcommand{\labelenumi}{(\alph{enumi})~}
\item Given a space $A$, when is $X$ of the form \w{\mapp(A,Y)} for some space
  $Y$ (up to weakly equivalence)?
\item If $X$ satisfies the conditions prescribed in the answer to (a),
  how can we recover $Y$ from \w[?]{X\simeq\mapp(A,Y)}
\end{enumerate}

In this paper we concentrate on \emph{pointed} mapping spaces, since these appear to be more useful in applications. The special case when \w{A=S^{n}} has
  been studied intensively since the 1950s \wh see, e.g.,
  \cite{SugH,SugHS,StaH,MayG,JSmiS,CobbP,BadzA}.
Less has been done on the general problem (but see
\cite{BDoraL,BlaM,CDInteA,FTanHS,IStromH}).

\begin{remark}\label{rrecover}
Note that the idea of recovering $Y$ itself from \w{X:=\mapa(A,Y)} does not really make sense: this is evident even for  \w[,]{A=S^{1}} where
\w{\Omega Y=\mapa(A,Y)} depends only on the base-point component of $Y$. In fact, \w{\Omega Y} is weakly equivalent to \w[,]{\Omega(\CW Y)} where \w{\CW Y} is the usual CW-approximation of $Y$.  More generally, for any \w{A,Y\in\Ta} we have \w[,]{\mapa(A,Y)\simeq\mapa(A,\CWA{Y})} where \w{\CWA{Y}} is the $A$-CW approximation of $Y$ (cf.\ \cite[\S 2]{DroC}).
Thus the best we can hope for is to recover $Y$ from \w{\mapa(A,Y)} \emph{up to $A$-equivalence} (see Definition \ref{daequiv} below).
\end{remark}

\begin{mysubsection}{Summary of results}\label{ssumr}
Our approach is based on the notion of an \emph{\Ama[,]} first introduced in
\cite{BBlaC}): this encodes the action of mapping spaces between various colimits constructed out of $A$ on the corresponding limits of \w[.]{\mapa(A,Y)} It can be thought of as a continuous version of Lawvere's concept of an algebra over a theory $\Theta$, replacing the latter by a simplicially enriched category \w[,]{\TsA} with \w{\fX:\TsA\op\to\Sa} a continuous product-preserving functor.

In fact, one can define the notion of an extended \Ama by adding certain (homotopy) colimits to \w{\TsA} (to obtain a suitable enriched sketch \w[,]{\TsF} in the sense of Ehresmann \wh cf.\ \cite{EhreET}), and requiring $\fX$ to take these colimits to limits in \w{\Sa}
(see Section \ref{cma}).

A judicious choice of the colimits we allow provides enough extra structure on \w{\mapa(A,-)} to be able to recognize the image of this functor. The technical apparatus needed to specify these choices, using the Stover construction of \cite{StoV}, is given in Section \ref{csc},  with the connection to mapping algebras explained in Section \ref{cmona}.

In Section \ref{cicc} we explain how to use this extra structure on $X$ to inductively construct a transfinite sequence of objects \w{\qY{\alpha}} as appropriate homotopy colimits, and show:

\begin{thma}
Any pointed simplicial set $X$ equipped with an extended \Ama structure $\hfX$
is weakly equivalent to \w[,]{\mapa(A,\hocolim_{\alpha<\lambda}\qY{\alpha})}
where \w{(\qY{\alpha})\sb{\alpha<\lambda}} is the sequence of objects  constructed in Section \ref{cicc}.
\end{thma}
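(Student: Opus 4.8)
The plan is to construct, compatibly along the transfinite tower, a map of (extended) $A$-mapping algebras $\qf{\alpha}\colon \hfX \to \mapa(A,\qY{\alpha})$ for each $\alpha<\lambda$, to pass to the colimit, and to show that the resulting map
\[
\fff \colon \hfX \longrightarrow \mapa(A,\hocolim_{\alpha<\lambda}\qY{\alpha})
\]
is a weak equivalence of underlying pointed simplicial sets. Two essentially independent facts are needed: first, that $\mapa(A,-)$ carries the homotopy colimit $\hocolim_{\alpha<\lambda}\qY{\alpha}$ to $\hocolim_{\alpha<\lambda}\mapa(A,\qY{\alpha})$, so that $\fff$ is well defined as $\hocolim_{\alpha}\qf{\alpha}$; and second, that $\fff$ induces an isomorphism on all homotopy groups. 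I expect the second to be the crux.

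For the comparison maps, recall that by construction each $\qY{\alpha}$ is assembled by a Stover-type cell attachment (cf.\ \cite{StoV}), the cells being homotopy colimits of copies of $A$ indexed by the data recorded in $\hfX$: the simplices of $X$ together with the operations and relations encoded by the enriched theory $\TsA$ and its extension to the sketch $\TsF$. Each such attaching datum is, via the unit of the relevant adjunction, a map out of $A$ into $\qY{\alpha}$, hence an element of $\mapa(A,\qY{\alpha})$; sending generators to these elements defines $\qf{\alpha}$, and its compatibility with the tower maps $\qY{\alpha}\to\qY{\alpha+1}$ is built into the construction of Section \ref{cicc}. At a limit ordinal $\beta$ one has $\qY{\beta}=\hocolim_{\alpha<\beta}\qY{\alpha}$, and $\qf{\beta}$ is the induced map.

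For the interchange I would use that the tower $(\qY{\alpha})_{\alpha<\lambda}$ consists of cofibrations between cofibrant objects, so that its homotopy colimit is computed by the ordinary colimit, together with the smallness of $A$: assuming $A$ is homotopy compact, $\mapp(A,-)$ commutes up to weak equivalence with the filtered colimit along the tower, and since the pointed mapping space is obtained from the unpointed one by a finite homotopy limit it inherits this property. Hence $\hocolim_{\alpha}\mapa(A,\qY{\alpha})\xsim \mapa(A,\hocolim_{\alpha}\qY{\alpha})$, and $\fff$ is identified with $\hocolim_{\alpha}\qf{\alpha}$.

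The heart of the argument is showing that $\fff$ is a weak equivalence, which I would prove by transfinite induction, tracking how $\qf{\alpha}$ improves as $\alpha$ grows. The defining property of the \emph{extended} mapping algebra structure \wh that $\hfX$ takes the chosen homotopy colimits of the sketch $\TsF$ to homotopy limits in $\Sa$ \wh is exactly what guarantees that the homotopy operations on $\pi_{\ast}\mapa(A,\qY{\alpha})$ agree with those recorded by $\hfX$, so that the cells attached at stage $\alpha+1$ realize precisely the simplices, and fill precisely the relations, by which $X$ still differs from $\mapa(A,\qY{\alpha})$, with no residual (phantom) obstructions. A dimension-by-dimension (Postnikov or skeletal) bookkeeping then shows that $\pi_{n}(\qf{\alpha})$ becomes an isomorphism for $n$ in an increasing range, whence $\pi_{\ast}(\fff)=\hocolim_{\alpha}\pi_{\ast}(\qf{\alpha})$ is an isomorphism once $\lambda$ is taken large enough for the process to stabilize. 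The main obstacle is precisely this convergence: one must verify both that the extended structure supplies exactly the attaching data needed to kill every obstruction, and that the transfinite length $\lambda$ is chosen beyond the size of $X$ and $A$ so that $\fff$ is already a weak equivalence by stage $\lambda$ \wh this is where the limit-preservation axioms and the Stover construction do the essential work.
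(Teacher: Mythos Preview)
Your outline has the right large-scale ingredients (tower of approximations, $\lambda$-smallness of $A$ to pass $\mapa(A,-)$ through the colimit, then check on homotopy groups), but two concrete points are wrong and would block the argument as written.

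\textbf{Direction of the comparison maps.} The maps actually produced in Section~\ref{cicc} go the other way: one obtains \w[,]{\qf{\alpha}\colon\fMqF{\alpha}\qY{\alpha}\to\hfX} from the realizable mapping algebra of the approximation \emph{into} $\hfX$, because each \w{\qY{\alpha}} lies in \w{\qF{\alpha}} and so \w{\fMqF{\alpha}\qY{\alpha}} is free; Lemma~\ref{lfreema} then supplies the map. There is no evident way to build a \emph{compatible} family of mapping-algebra maps \w[:]{\hfX\to\mapa(A,\qY{\alpha})} $\hfX$ is not free, and the only maps in that direction the construction hands you are truncated sections \w{\qs{\alpha+1}\colon\rho\fX\to\rho\fMA\qY{\alpha+1}} coming from the unit of the adjunction \w[.]{(\tLA,\rho\fMAsd)} These live only at the $1$-truncated discrete level and are \emph{not} compatible with the tower maps (in the ladder \wref{eqladder} only the squares and the lower triangles commute), so they do not assemble into a map on the colimit. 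The comparison one actually analyzes is \w[.]{\fff:=\qf{\lambda}\colon\fMqF{\lambda}\qY{\lambda}\to\hfX}

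\textbf{Mechanism of the weak equivalence.} There is no Postnikov-style ``increasing range of isomorphism'' along the tower; the individual \w{\qf{\alpha}} need not be close to equivalences at any finite stage. One checks directly that \w{\fff_{\#}} is bijective on \w{\pi_{0}\hfX\lin{\Sigma^{i}A}} for every \w[.]{i\geq 0} Surjectivity is immediate from any single truncated section, since \w{\hr\qf{\alpha+1}\circ\qs{\alpha+1}=\Id} and this persists in the colimit. Injectivity is a one-step argument that uses the generalized Stover construction essentially: if \w{g,g'\colon\Sigma^{i}A\to\qY{\alpha}} have \w{\qf{\alpha}(g)} and \w{\qf{\alpha}(g')} joined by a $1$-simplex \w[,]{\sigma\in\fX\lin{\Sigma^{i}A}_{1}} then $\sigma$ itself indexes a cylinder \w{\Sigma^{i}A\rtimes\Delta[1]} inside \w[,]{\tLA\rho\fX} and its image under \w{\qp{\alpha+1}} furnishes a homotopy \w{\qi{\alpha}\circ g\sim\qi{\alpha}\circ g'} in \w[.]{\qY{\alpha+1}} Your sketch does not isolate either mechanism, and the appeal to ``limit-preservation axioms'' and eventual convergence does not substitute for them.
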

\noindent See Theorem \ref{treal} below. This provides a theoretical answer to the first question, too \wh although, as with many recognition principles, it is essentially impossible to verify in practice\vsm .

There is also a homotopy-invariant version of
the second approach, in which we replace \Ama[s] by their lax version
(see \S \ref{slat}), using an appropriate cofibrant replacement of
\w[,]{\TsA} described in Section \ref{chiv}. We then show:

\begin{thmb}
If $X$ is a pointed simplicial set equipped with a lax extended \Ama
structure, and the sequence \w{(\tqY{\alpha})\sb{\alpha<\lambda}}
is constructed in \S \ref{slscc}, then \w[,]{X\simeq\mapa(A,Y)} for \w[.]{Y:=\hocolim_{\alpha<\lambda}\tqY{\alpha}} Moreover, if
\w[,]{X\simeq X'} then \w{X'} also has an induced lax extended \Ama
structure, from which we can recover \w{Y'} ($A$-equivalent to $Y$) with \w[.]{X'\simeq\map(A,Y')}.
\end{thmb}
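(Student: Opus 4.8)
The plan is to deduce Theorem B from Theorem A by rectifying the lax structure to a strict one, and then to exploit the homotopy-invariance built into the lax formalism for the second assertion. I would begin by recalling from Section \ref{chiv} that a lax extended \Ama structure on $X$ is the same as a strict algebra over the cofibrant replacement of \w[.]{\TsA} Such an algebra can be rigidified: pushing it forward along the weak equivalence from the cofibrant replacement to \w{\TsA} yields a genuine extended \Ama structure on a pointed simplicial set \w{X''} together with a weak equivalence \w[.]{X\simeq X''}

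Applying Theorem A (see \ref{treal}) to \w{X''} then produces a sequence \w{(\qY{\alpha})\sb{\alpha<\lambda}} with \w[.]{X''\simeq\mapa(A,\hocolim_{\alpha<\lambda}\qY{\alpha})} The next step is to compare this with the construction of \S \ref{slscc}: I would verify that the transfinite sequence \w{(\tqY{\alpha})\sb{\alpha<\lambda}} obtained directly from the lax data on $X$ agrees, stage by stage and up to $A$-equivalence, with the sequence \w{(\qY{\alpha})\sb{\alpha<\lambda}} coming from the rigidification, so that the homotopy colimits \w{Y=\hocolim_{\alpha<\lambda}\tqY{\alpha}} and \w{\hocolim_{\alpha<\lambda}\qY{\alpha}} are $A$-equivalent. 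Combining these equivalences with \w{X\simeq X''} gives \w[,]{X\simeq\mapa(A,Y)} which is the first assertion.

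For the homotopy-invariance statement, the essential point \wh and the reason for passing to cofibrant replacements in the first place \wh is that a homotopy-coherent diagram indexed by a cofibrant source \emph{transports along weak equivalences}. Hence, given \w[,]{X\simeq X'} the lax extended \Ama structure on $X$ induces one on \w[,]{X'} and applying the first part of the theorem to \w{X'} yields \w{Y'} with \w[.]{X'\simeq\mapa(A,Y')} To see that \w{Y'} is $A$-equivalent to $Y$ (in the sense of Definition \ref{daequiv}), I would argue that the construction of \S \ref{slscc} is \emph{homotopy-functorial} in its lax input: weakly equivalent pointed simplicial sets carrying compatible lax structures produce $A$-equivalent objects \w{\tqY{\alpha}} at every stage, compatibly with the structure maps, and hence $A$-equivalent homotopy colimits. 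Since the lax structures on $X$ and \w{X'} are related precisely by the transport along \w[,]{X\simeq X'} this gives that \w{Y'} is $A$-equivalent to $Y$.

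The main obstacle is exactly this last homotopy-functoriality. One must check that the transfinite induction building the \w{\tqY{\alpha}} respects weak equivalences of lax algebras at every successor and limit ordinal, carrying along the coherence homotopies so that the resulting $A$-equivalences are compatible with the structure maps and assemble into an $A$-equivalence of the homotopy colimits. This is where the cofibrancy of the replacement of \w{\TsA} and the compatibility of \ww{\hocolim} with levelwise weak equivalences must be combined with care; the same compatibility is what underlies the stage-by-stage comparison between the lax construction of \S \ref{slscc} and the rigidified sequence used in the first part.
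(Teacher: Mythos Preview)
Your approach for the first assertion is genuinely different from the paper's. The paper does \emph{not} rectify the lax structure to a strict one and then invoke Theorem~A. Instead, it observes that the proof of Theorem~\ref{treal} ``readily adapts'' to the lax setting: the construction of \S\ref{slscc} is already written so that every pushout is a homotopy pushout (Lemma~\ref{llpushout}), every commutativity is only required up to homotopy (see \wref{eqhexp} and \wref[),]{eqhinmapo} and the argument showing that \w{\qf{\lambda}} induces a bijection on \w{\pi_{0}} goes through verbatim with \w{\fMAe} in place of \w[.]{\fMqF{\alpha}} This is Theorem~\ref{threal}. Your route via rectification is not wrong in principle, but it introduces two nontrivial steps the paper avoids entirely: (i) the rectification itself, which you assert but do not justify (one must check that the strictified functor still satisfies the \emph{strict} limit condition \wref{eqcolim} on objects, not merely up to homotopy), and (ii) the stage-by-stage comparison of the two transfinite towers, which you correctly flag as the main obstacle. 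The paper's direct adaptation sidesteps both.

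For the second assertion your outline is close to what the paper does. The transport of the lax structure along \w{X\simeq X'} is exactly the content of Proposition~\ref{plema}, which the paper proves by an explicit zig-zag argument using the cofibrancy of \w{\hCesA} and lifting against a trivial fibration of \ww{\SaO}-categories (see \wref[).]{eqliftalg} Your phrase ``homotopy-coherent diagrams indexed by a cofibrant source transport along weak equivalences'' is the right intuition, but the paper makes this concrete via Lemma~\ref{ldema} and the diagram~\wref[.]{eqsections} As for the $A$-equivalence \w[,]{Y\sim_{A}Y'} the paper is itself somewhat terse here; your proposed argument via homotopy-functoriality of the construction in \S\ref{slscc} is reasonable, and the uniqueness clause in Proposition~\ref{plema} (``unique up to weak equivalence'') is what underlies it.

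In short: your proof of the first part takes a detour through rectification that the paper deliberately avoids, and the detour carries exactly the burden you identify as the obstacle. The paper's point is that the lax construction of \S\ref{slscc} was designed so that the proof of Theorem~A runs \emph{directly} in the lax setting, with no rectification or comparison needed.
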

\noindent See Theorem \ref{threal} below\vsm .

In \S \ref{slsr} we observe that when $A$ is a wedge of (possibly
localized) spheres of various dimensions, we can use a lax version of
Stover's construction (see \cite{StoV}) to recover $Y$ up to homotopy
from \w[.]{X\simeq\mapa(A,Y)} This allows one to bypass the anomalous
behavior of the usual ``homotopy multiplicative'' loop space
recognition and recovery machinery described in \cite{BDoraL}.

Finally, in the Appendix we sketch an alternative ``Dold-Lashof'' approach to recovering the target of a mapping space, in the form of an obstruction theory based on the construction of \cite[\S 2B]{DroC} (see Theorem \ref{tobstr}).
\end{mysubsection}

\begin{mysubsection}{Notation and conventions}\label{snot}
The category of compactly generated Hausdorff spaces (cf.\ \cite{SteCC}, and compare \cite{VogtCC}) is denoted by $\TT$, and that of pointed connected compactly generated spaces by \w[.]{\Ta}
For any category $\C$, \w{s\C:=\C^{\bDelta\op}} is the category of
simplicial objects over $\C$. We abbreviate \w{s\Set} to \w[,]{\Ss}
\w{s\Sets} to \w[,]{\Sa} and \w{s\Grp} to \w[.]{\G} The full subcategory of fibrant objects (Kan complexes) in \w{\Sa} is denoted by \w[.]{\Sk}

If \w{\lra{\V,\otimes}} is a monoidal category, we denote by \w{\VC}
the collection of all (not necessarily small) categories enriched over $\V$
(see \cite[\S 6.2]{BorcH2}). For any set $\OO$, denote by \w{\OC} the
category of all small categories $\D$ with \w[.]{\Obj\D=\OO} A
\ww{\VO}-\emph{category} is a category \w{\D\in\OC} enriched over
$\V$, with $\V$-mapping objects \w[.]{\mape{v}{\D}(-,-)}
The category of all \ww{\VO}-categories will be denoted by \w[.]{\VOC}

The main examples of \w{\lra{\V,\otimes}} we have in mind are
\w[,]{\lra{\Set,\times}} \w[,]{\lra{\Grp,\times}} \w[,]{\lra{\Gpd,\times}}
\w[,]{\lra{\Ss,\times}} and \w[.]{\lra{\Sa,\wedge}}
\end{mysubsection}

\begin{mysubsection}{Assumptions}\label{sass}
Throughout this paper we consider only pointed simplicial model categories $\C$: This means that in addition to the pointed simplicial mapping space
\w{\mapa(X,Y)=\map_{\C}(X,Y)\in\Sa} for any \w[,]{X,Y\in\C} we have two bifunctors \w{-\otimes-:\C\times\Ss\to\C} and \w[,]{(-)^{(-)}:\C\times\Ss\op\to\C} as in \cite[II, \S 1-2]{QuiH}.

We may assume that the $n$-simplices of \w{\mapa(X,Y)} are given by \w[,]{\mapa(X,Y)_{n}:=\Hom_{\C}(X\otimes\Delta[n],Y)}
where \w{\Delta[n]} is the standard simplicial $n$-simplex in $\Ss$.

We assume further that $\C$ has all limits and colimits, is cofibrantly generated and left proper (cf.\ \cite[\S 13.1]{PHirM}, with functorial factorizations (cf.\ \cite[\S 1.1.1]{HovM}), such that all objects are fibrant  (e.g., \w{\C=\Ta}
or $\G$).  We make an additional assumption on the simplicial structure in
\S \ref{reso} below\vsm .

When this does not hold \wh for example, for \w{\C=\Sa} \wwh one can sometimes use an appropriate Dwyer-Kan equivalence \w{R:\C\to \D} to a category $\D$ with all objects fibrant to produce a new enrichment \w{\map'_{\C}(A,Y):=\map_{\D}(RA,RY)} whose properties we can then study by the methods described here.  We shall not pursue this idea further in this paper.
\end{mysubsection}

\begin{ack}
The authors wish to thank Boris Chorny for useful comments on this paper, and the referee for many detailed and pertinent remarks.  The second author was partially supported by Israel Science Foundation grant 74/11.
\end{ack}

%
%c1   Mapping algebras
%
\sect{Mapping algebras}
\label{cma}

Any mapping space \w{X=\mapa(A,Y)\in\Sa} (for fixed $A$) has
additional structure, given by the action of \w{\mapa(A,A)} on $X$,
and so on. This can be useful in recovering $Y$ from $X$, as we know
from the example of \w{A=S^{1}} (see \cite{SugH,StaH}).
In order to codify this extra structure, we shall use the following version of 
the notion of a \emph{\ma[,]} introduced in \cite[\S 8]{BBlaC} 
(see also \cite[\S 2]{BBlaS}).

Recall that for a pointed simplicial model category $\C$ as in \S \ref{sass}, one can define the $n$-fold suspension \w{\Sigma\sp{n} A} of any \w{A\in\C} to be \w{A\otimes\bDelta[n]/A\otimes\partial\bDelta[n]}  (see \cite[I, \S 2]{QuiH} and compare \cite[\S 5.2]{DKStE}). We let \w[.]{\Sigma A:=\Sigma\sp{1}A}

\begin{defn}\label{dmapalg}
Let $A$ be a fixed cofibrant object in $\C$, and let \w{\TsA} be the full sub-simplicial category of $\C$ generated by $A$ under suspensions 
and coproducts of cardinality \www[,]{<\kappa} for a suitably chosen 
cardinal $\kappa$ (see \S \ref{sia} below). More generally, we can let 
$\F$ be any set of cofibrant objects in $\C$, with \w{\TsF} the full sub-simplicial category of $\C$ generated by $\F$ under
suspensions and coproducts of cardinality \www[.]{<\kappa}
A simplicial functor \w[,]{\fX:\TsF\op\to\Sk} written 
\w[,]{B\mapsto\fX\lin{B}} is called an \emph{\Fma} if $\fX$ takes coproducts 
in \w{\TsA} to products and suspensions to loops:
\begin{myeq}\label{eqcoploop}
\fX\lin{\coprod_{i}\,B\sb{i}}~\cong~\prod\sb{i}\,\fX\lin{B\sb{i}}
\hs\text{and}\hs
\fX\lin{\Sigma B}~\cong~\Omega\fX\lin{B}~
\end{myeq}
\noindent for \w[.]{B, B_{i}\in\F} Note that each \w{\fX\lin{B}} is fibrant, by assumption. The category of \Fma[s] will be denoted by \w[.]{\MsF} In particular, when \w[,]{\F=\{A\}} an \Fma is called an \emph{\Ama[,]} and the category of \Ama[s] will be denoted by \w[.]{\MsA}
\end{defn}

\begin{remark}\label{roppcat}
One could rephrase \wref{eqcoploop} more succinctly by requiring that $\fX$ preserve the loops and products of the \emph{opposite} category \w[.]{\TsF\op\subseteq\C\op} However, since the main example we have in mind is \w[,]{\C=\Ta} we thought it would be easier for the reader to keep track of the constructions in the original category $\C$.
\end{remark}

\begin{defn}\label{dreal}
For each \w{Y\in\C} we have a \emph{realizable} \Fma \w[,]{\fMF Y}
defined by setting \w{(\fMF Y)\lin{B}:=\map_{\C}(B,Y)} for each
\w[.]{B\in\TsF} When $\F$ is generated by $A$, we write \w{\fMA Y} for \w[.]{\fMF Y}
\end{defn}

\begin{example}\label{egmapalg}
When \w[,]{\C=\Ta} \w[,]{A=\bS{1}} and \w[,]{\kappa=\omega} we see
that \w{\TsA} is the full sub-simplicial category of \w{\Ta} whose
objects are finite wedges of spheres. In this case an \Ama is an
enriched version of a $\Pi$-algebra (cf.\ \cite[\S 4]{StoV}) and a \emph{realizable} \Ama[,] denoted by \w[,]{\fMA Y} is just a loop space \w{Z=\Omega Y} equipped with an action of all loop spaces of wedges of spheres on products of iterated loops of $Z$.
\end{example}

\begin{defn}\label{daequiv}
If $A$ is a fixed cofibrant object $A$ in a pointed simplicial model category $\C$ as above, we say that a map \w{f:X\to Y} in $\C$ is an $A$-\emph{equivalence} if the induced map \w{f_{\ast}:\mapp_{\C}(A,X)\to\mapp_{\C}(A,Y)} is a weak equivalence of pointed simplicial sets. In particular, the natural map \w{\CWA Y\to Y}
(cf.\ \cite[\S2 A]{DroC} is an $A$-equivalence.

Such an $A$-equivalence induces a weak equivalence of \Ama[s] \w[,]{f\sb{\ast}:\fMA X\to\fMA Y} in the obvious sense.
\end{defn}

\begin{defn}\label{dfreema}
A \emph{free} \Fma is one of the form \w{\fMF B} for \w[.]{B\in\TsF}
\end{defn}

\begin{lemma}[cf.\ \protect{\cite[8.13]{BBlaC}}]\label{lfreema}
If $\fY$ is an \Fma and \w{\fMA B} is a free \Fma (for \w[),]{B\in\TsF}
there is a natural isomorphism
$$
\Phi:\map_{\MsF}(\fMF B,\fY)~\xra{\cong}~\fY\lin{B}~,
$$
\noindent with \w{\Phi(\fff)=\fff(\Id_{B})\in\fY\lin{B}_{0}} for any
\w[.]{\fff\in\Hom_{\MsF}(\fMF B,\fY)=\map_{\MsF}(\fMF B,\fY)_{0}}
\end{lemma}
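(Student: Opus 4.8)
The plan is to recognize the statement as an instance of the enriched (simplicial) Yoneda lemma, once two preliminary identifications are made. First I would observe that the free \Fma $\fMF B$ is precisely the representable simplicial functor $\TsF\op(B,-)$: since $\TsF$ is a full sub-simplicial category of $\C$, we have $(\fMF B)\lin{C}=\map_\C(C,B)=\TsF\op(B,C)$ for every $C\in\TsF$, with the functoriality in $C$ given by precomposition. (That $\fMF B$ is indeed an \Fma{} — i.e.\ that it satisfies \eqref{eqcoploop} — is automatic, since $\map_\C(-,B)$ carries coproducts to products and, by the simplicial model structure, suspensions to loops.) Second, I would note that $\MsF$ is by construction a full sub-simplicial category of the simplicial functor category $[\TsF\op,\Sa]$: the conditions \eqref{eqcoploop} restrict the objects only and impose nothing on morphisms. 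Consequently the mapping object $\map_{\MsF}(\fMF B,\fY)$ agrees with the simplicial set of simplicial natural transformations $\TsF\op(B,-)\to\fY$ formed in the ambient functor category.

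With these identifications the isomorphism $\Phi$ is exactly the content of the enriched Yoneda lemma (see, e.g., \cite{BorcH2}), namely evaluation at the identity, $\fff\mapsto\fff(\Id_B)$. To confirm the stated formula I would write down the inverse explicitly: a vertex $y\in\fY\lin{B}_0$ is sent to the natural transformation whose $C$-component is the composite
$$\map_\C(C,B)=\TsF\op(B,C)\xra{\fY}\mapa(\fY\lin{B},\fY\lin{C})\xra{\operatorname{ev}_y}\fY\lin{C},$$
in which the first arrow is the action of the simplicial functor $\fY$ on mapping objects and the second is evaluation at $y$. A direct check shows this assignment is simplicially natural in $C$ and two-sided inverse to evaluation at $\Id_B$; naturality of $\Phi$ in $\fY$ is then routine.

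The one point requiring genuine care is the passage from the familiar pointwise (set-level) Yoneda bijection to the full enriched statement at the level of simplicial mapping objects: one must check that $\Phi$ and its inverse respect the simplicial structure, i.e.\ the action of $\fY$ on the positive-dimensional simplices of $\TsF\op(B,C)$, and not merely on vertices. This is exactly where the hypothesis that $\fY$ is a genuine simplicial functor (rather than one defined only up to homotopy) enters. Since $\MsF$ sits as a \emph{full} sub-simplicial category, the product/loop conditions \eqref{eqcoploop} play no role in the morphism-level argument, so once the simplicial Yoneda isomorphism is in hand the proof is complete.
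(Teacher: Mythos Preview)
Your proposal is correct and follows essentially the same route as the paper: the paper's proof consists of a single sentence invoking the strong Yoneda Lemma for enriched categories (citing \cite[2.4]{GKellyEC}), and your argument is a more detailed unpacking of precisely that invocation, identifying $\fMF B$ with the representable and $\MsF$ with a full sub-simplicial category of the functor category.
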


\begin{proof}
This follows from the strong Yoneda Lemma for enriched categories (see
\cite[2.4]{GKellyEC}).
\end{proof}

Evidently, the values of any \Ama $\fX$ on the \emph{objects} of \w{\TsA} is completely determined by the single simplicial set \w[,]{Y:=\fX\lin{A}}
since all other values are obtained from $Y$ by \wref[.]{eqcoploop} This leads to the following

\begin{defn}\label{ddiscma}
A \emph{discrete} \Ama is a function \w[,]{\fX:\Obj(\TsA)\to\Sk}
written as before \w[,]{B\mapsto\fX\lin{B}} equipped with isomorphisms
\begin{myeq}[\label{eqdtheta}]
\theta_{A}:\fX\lin{\Sigma A}\to\Omega\fX\lin{A}
\end{myeq}
\noindent as in \wref[.]{eqcoploop}  The category of such is denoted
by \w[,]{\MsAd} and there is a forgetful functor \w[,]{U:\MsA\to\MsAd}
written \w[.]{\fX\mapsto\fX^{\delta}}
\end{defn}

\begin{mysubsection}{Extended \Ama[s]}\label{sema}
We can of course further extend the domain of definition of a discrete \Ama
to the collection \w{\C_{A}}of all objects of $\C$ obtained recursively from $A$ by pointed (homotopy) colimits. However, lifting such an ``extended discrete \Ama'' to
a simplicial functor on the full simplicial subcategory of $\C$ with objects \w{\C_{A}}  requires a great deal of additional structure, so we want to be able to restrict the type of colimits used to the minimum actually required for our purposes.  In particular, we need to restrict to a \emph{set} of objects in  \w[.]{\C_{A}}

In this connection we recall the following:
\end{mysubsection}

\begin{defn}\label{dsmall}
An object $A$ in a pointed simplicial model category $\C$ is called
$\lambda$-\emph{small}, for an ordinal $\lambda$, if the canonical map
\begin{myeq}[\label{eqsmall}]
\colim_{\alpha}\ \map_{\C}(B,X_{\alpha})~\to~
\map_{\C}(B,\colim_{\alpha}\,X_{\alpha})
\end{myeq}
\noindent is an isomorphism for any sequence
\w{(X_{\alpha})_{\alpha<\lambda}} in $\C$ and any object $B$ of the
form \w{B=\Sigma^{i}A} \wb[.]{i\geq 0}
\end{defn}

This always holds for any $A$ (for some limit ordinal $\lambda$) when
$\C$ is \w{\Sa} or \w{\Ta} (cf.\ \cite[\S 1.B.3]{DroC}); we shall
always assume the source \w{A\in\C} for any putative mapping space is
$\lambda$-small for some $\lambda$; the least such infinite ordinal
will be denoted by \w[.]{\lambda(A)} In particular, when \w{A\in\Ss} is a finite CW complex, we have \w[.]{\lambda(A)=\omega}

\begin{defn}\label{deama}
Given \w{A\in\C} and a set $\II$ of diagrams \w[,]{I\to\C} let \w{\TisA} denote the smallest sub-simplicial category of $\C$ containing $A$ and closed under pointed colimits over the diagrams in $\II$.

We shall always assume that all objects in \w{\TisA} are obtained from $A$ by iterated pointed colimits over diagrams from $\II$, and \w{\TisA} is closed under suspensions and coproducts of cardinality $<\kappa$, for some limit cardinal $\kappa$.

We define an \emph{extended \Ama} $\fX$ with respect to $\II$ to be a simplicial functor \w{\fX:(\TisA)\op\to\Sk} such that:
\begin{myeq}\label{eqcolim}
\fX\lin{B}~:=~\lim_{i\in I}\,\fX\lin{B_{i}}
\end{myeq}
\noindent whenever \w{B=\colim_{i\in I}B_{i}} for some diagram \w{I\to\C} in $\II$.
By assumption $\fX$ will in particular satisfy \wref[.]{eqcoploop}
The category of all such extended \Ama[s] will be denoted by \w[.]{\MsAI}
\end{defn}

%
%  Lemma:  Smallness of mapping algebras
%
\begin{lemma}\label{lsmallma}
If $\fX$ is an extended \Ama such that \w{\fX\lin{A}} is $\kappa$-small in \w[,]{\Sa} then $\fX$ is $\kappa$-small in \w[.]{\MsAI}
\end{lemma}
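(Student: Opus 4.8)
The plan is to reduce the $\kappa$-smallness of $\fX$ in $\MsAI$ to the assumed $\kappa$-smallness of the single simplicial set $\fX\lin{A}$ in $\Sa$. Two ingredients are needed: that colimits of $\kappa$-sequences in $\MsAI$ are computed objectwise, and that a morphism out of $\fX$ is completely controlled by its value at $A$. For the first, I would check that for any $\kappa$-sequence $(\fY_{\alpha})_{\alpha<\kappa}$ in $\MsAI$ the objectwise assignment $B\mapsto\colim_{\alpha}\fY_{\alpha}\lin{B}$ is again an extended \Ama[,] and hence computes the colimit in $\MsAI$. Colimits of functors valued in $\Sa$ are objectwise, and a $\kappa$-sequence is a filtered diagram, so each $\colim_{\alpha}\fY_{\alpha}\lin{B}$ is a filtered colimit of Kan complexes and therefore again Kan, landing in $\Sk$. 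It remains to see that the objectwise colimit still satisfies \eqref{eqcoploop} and \eqref{eqcolim}; but those conditions exhibit $\fY\lin{-}$ as a limit over the diagrams occurring there (products indexed by sets of cardinality $<\kappa$, the loop functor $\Omega$, and the colimit diagrams $I\in\II$, which we have arranged to have cardinality $<\kappa$), and, taking $\kappa$ regular, a colimit of a $\kappa$-sequence is $\kappa$-filtered and so commutes with these $<\kappa$-limits in $\Sa$. Hence $(\colim_{\alpha}\fY_{\alpha})\lin{B}\cong\colim_{\alpha}\fY_{\alpha}\lin{B}$ for every $B\in\TisA$, and in particular for $B=A$.

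Next I would use that $\TisA$ is generated by $A$ under suspensions, $<\kappa$-coproducts and $\II$-colimits. Using naturality together with the structure isomorphisms \eqref{eqcoploop}--\eqref{eqcolim}, an induction on the construction of $B\in\TisA$ shows that every component $\fff_{B}$ of a morphism $\fff\colon\fX\to\fY$ in $\MsAI$ is determined by its single component $\fff_{A}\colon\fX\lin{A}\to\fY\lin{A}$; thus evaluation at $A$ is faithful and realizes $\map_{\MsAI}(\fX,\fY)$ as the sub-simplicial set of structure-preserving maps inside $\map_{\Sa}(\fX\lin{A},\fY\lin{A})$. (This is the morphism-level counterpart of Lemma \ref{lfreema}, which identifies $\map_{\MsAI}(\fMA A,\fY)$ with $\fY\lin{A}$.) Injectivity of $\colim_{\alpha}\Hom_{\MsAI}(\fX,\fY_{\alpha})\to\Hom_{\MsAI}(\fX,\colim_{\alpha}\fY_{\alpha})$ then follows from faithfulness of evaluation at $A$ together with the injectivity built into the $\kappa$-smallness of $\fX\lin{A}$: two morphisms out of $\fX$ that agree in the colimit already agree after evaluation at $A$ at some stage $<\kappa$, hence agree there.

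The main obstacle is the surjectivity. Given a morphism $\fX\to\colim_{\alpha}\fY_{\alpha}$, the objectwise description of the colimit and the $\kappa$-smallness of $\fX\lin{A}$ let its $A$-component factor as $\fX\lin{A}\to\fY_{\alpha_{0}}\lin{A}$ for some $\alpha_{0}<\kappa$; the difficulty is to promote this to an actual morphism of extended mapping algebras at a single stage $<\kappa$. Such a lift must respect all of the structure, which amounts to the commutativity of one naturality square for each morphism of $\TisA$; every such square holds in the colimit, and hence already at some stage $\geq\alpha_{0}$. The crucial point is that, because $A$ generates $\TisA$ and all the operations defining it have arity $<\kappa$, these conditions are generated by a family of cardinality $<\kappa$; by regularity of $\kappa$ the supremum of the corresponding stages is again an ordinal $\alpha_{1}<\kappa$ at which all of them hold simultaneously, so the lift $\fX\to\fY_{\alpha_{1}}$ exists. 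Running this argument in each simplicial degree promotes the resulting bijection of vertex sets to the isomorphism $\colim_{\alpha}\map_{\MsAI}(\fX,\fY_{\alpha})\xra{\cong}\map_{\MsAI}(\fX,\colim_{\alpha}\fY_{\alpha})$, which is exactly the $\kappa$-smallness of $\fX$.
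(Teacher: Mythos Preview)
Your argument is correct and rests on the same key observation the paper uses: a morphism of extended \Ama[s] is completely determined by its component at $A$, since every other object of $\TisA$ is built from $A$ by iterated colimits and \eqref{eqcoploop}--\eqref{eqcolim} force the remaining components. The paper's proof is just this one sentence; your additional care about objectwise colimits in $\MsAI$, the regularity of $\kappa$, and the surjectivity step (bounding the stage at which all naturality conditions hold) fills in details the paper leaves implicit, but does not constitute a different route.
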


\begin{proof}
A map of extended \Ama[s] \w{\fff:\fX\to\fY} is determined by the map of pointed simplicial sets \w[,]{\fff\lin{A}:\fX\lin{A}\to\fY\lin{A}} because
by \wref{eqcoploop} we have \w[,]{\fff\lin{\coprod_{i}\,A_{i}}=\prod_{i}\,\fff\lin{A_{i}}}
and similarly for the other colimits in $\II$ by \wref[.]{eqcolim}
\end{proof}

\begin{defn}\label{ddema}
Given \w{A\in\C} and $\II$ as in \S \ref{deama}, a \emph{discrete extended \Ama}
(with respect to \w[)]{\TisA} is a function \w[,]{\fY:\TisA\to\Sk} written
\w[,]{B\mapsto\fY\lin{B}} which takes the colimits in \w{\TisA}
(of the diagrams in $\II$) to limits in \w{\Sa} (compare \S \ref{ddiscma}).

Note that such an $\fY$ is completely determined by the pointed simplicial
set \w{X:=\fY\lin{A}} (which we assume to be fibrant), with \w{\fY\lin{\colim_{I}B_{i}}:=\lim_{I}\fY\lin{B_{i}}} for every
diagram \w{I\to\C} in $\II$. We therefore denote this discrete extended \Ama $\fY$
by \w[.]{\fD{X}}

Letting $\OO$ denote the set of objects of \w[,]{\TisA} we define the \ww{\SaO}-category \w{\CX} to be the sub-simplicial category of \w{\Sa\op} whose objects are in the image of \w{\fD{X}} (cf.\ \S \ref{snot}).  Thus if
\w{B=\colim_{I}B_{i}} and \w{C=\colim_{J}C_{j}} are two objects in $\OO$, we have
\begin{myeq}\label{eqdema}
\begin{split}
\map_{\CX}(B,C)~:=&~\map_{\Sa}(\lim_{J}\fD{X}\lin{C_{j}},\,\lim_{I}\fD{X}\lin{B_{i}})\\
=&~\lim_{I}\map_{\Sa}(\lim_{J}\fD{X}\lin{C_{j}},\,\fD{X}\lin{B_{i}})~.
\end{split}
\end{myeq}
\end{defn}

By analogy with \cite[Lemma 1.4]{MayG} we have:

\begin{lemma}\label{ldema}
For any fibrant \w[,]{X\in\Sa} there is a natural one-to-one correspondence
between extended \Ama[s] \w{\fY\in\MsAI} with \w{\fY\lin{A}=X} and \ww{\SaO}-maps \w[.]{f:\TisA\to\CX}
\end{lemma}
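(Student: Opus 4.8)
The plan is to split each side into its action on objects and its action on mapping spaces. On objects both descriptions are forced to coincide with the discrete extended \Ama $\fD{X}$ of Definition \ref{ddema}, while on mapping spaces the two carry literally the same data once a contravariant simplicial functor into $\Sk$ is read as a covariant $\SaO$-functor into $\Sa\op$. Thus the content is an enriched, ``algebras $=$ functors'' unwinding, with all the real work concentrated in the bookkeeping of variances.

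First I would determine $\fY$ on objects. Given $\fY\in\MsAI$ with $\fY\lin{A}=X$, discarding its simplicial structure leaves a function $\Obj(\TisA)\to\Sk$ which, by \wref{eqcolim}, sends each $\II$-colimit in $\TisA$ to the corresponding limit in $\Sa$ --- that is, a discrete extended \Ama with value $X$ at $A$. By the uniqueness built into Definition \ref{ddema} this function equals $\fD{X}$; equivalently, since every object of $\TisA$ is produced from $A$ by iterated $\II$-colimits (including the suspensions and small coproducts of \wref{eqcoploop}), an induction over this construction fixes $\fY\lin{B}=\fD{X}\lin{B}$ for every $B$. Hence the simplicial sets $\fY\lin{B}$ are exactly the objects of $\CX$, and any $\fY$ lying over $X$ already carries the identity object-assignment of a putative $\SaO$-map $f:\TisA\to\CX$.

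Next I would match the morphism data. As an $\Sa$-enriched contravariant functor, the residual content of $\fY$ is a family of pointed maps $\fY_{B,C}:\map_{(\TisA)\op}(B,C)\to\map_{\Sk}(\fY\lin{B},\fY\lin{C})$ compatible with composition and units. Rewriting $\map_{(\TisA)\op}(B,C)=\map_{\TisA}(C,B)$, using the full inclusion $\Sk\hookrightarrow\Sa$, and invoking the defining formula \wref{eqdema}, which yields $\map_{\CX}(C,B)=\map_{\Sa}(\fD{X}\lin{B},\fD{X}\lin{C})$, each $\fY_{B,C}$ is precisely a map $\map_{\TisA}(C,B)\to\map_{\CX}(C,B)$. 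Setting $f_{C,B}:=\fY_{B,C}$ therefore assembles exactly the components of a $\SaO$-map $f:\TisA\to\CX$, while the reverse substitution $\fY_{B,C}:=f_{C,B}$ recovers $\fY$; the two recipes are patently inverse, so everything reduces to matching the composition and unit axioms.

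The one step where I expect to do real work is verifying that ``$\fY$ is a contravariant $\Sa$-functor landing in $\Sk$'' and ``$f$ preserves composition in $\CX\subseteq\Sa\op$'' are the same condition. Here the two order-reversals cancel: the composition pairing of $(\TisA)\op$ is the transpose of that of $\TisA$, while composition in $\CX$ is the transpose of composition in $\Sa$, so the associativity constraint for $\fY$ becomes, after the relabeling $f_{C,B}=\fY_{B,C}$, exactly the constraint that $f$ preserve composition, and likewise for units. This is the $\Sa$-enriched form of the elementary equivalence between contravariant functors into $\D$ and covariant functors into $\D\op$, and the only care required is that the smash-monoidal composition be threaded consistently through both sides. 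Finally, because the correspondence is given by the single choice-free formula $f_{C,B}=\fY_{B,C}$, it is natural in the evident sense, completing the argument.
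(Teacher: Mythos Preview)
Your argument is correct and follows essentially the same idea as the paper's proof: both recognize that the object-level data of any $\fY$ with $\fY\lin{A}=X$ is forced to be $\fD{X}$, so the remaining content is the enriched action on hom-spaces, which is literally an $\SaO$-map $\TisA\to\CX$ once the contravariance is unwound. The paper's version is terser --- it emphasizes that every mapping space of $\CX$ is an iterated limit of spaces of the form \wref{eqmaplimit} and then displays the single diagram translating $f$ into the action of $\map_{\C}(A,\colim_{J}C_{j})$ on $\fY\lin{\colim_{J}C_{j}}$ --- whereas you spell out both directions of the bijection and the matching of composition/unit axioms explicitly; but the underlying mechanism is the same enriched ``functors into $\Sa\op$ $=$ $\SaO$-maps to $\CX$'' identification.
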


\begin{proof}
Since all objects in $\OO$ are obtained from $A$ by iterated colimits, we see
from \wref{eqdema} that all the mapping spaces in \w{\CX} are obtained from
those of the form
\begin{myeq}\label{eqmaplimit}
\map_{\Sa}(\lim_{J}\fD{X}\lin{C_{j}},X)
\end{myeq}
\noindent by iterated limits. The action of
\w{\map_{\C}(A,\colim_{J}C_{j})} on \w{\fY\lin{\colim_{J}C_{j}}} is then given by
\begin{equation*}
\xymatrix@R=25pt{
\map_{\C}(A,\colim_{J}C_{j}) \ar[d]^{f} & \times &
\fY\lin{\colim_{J}C_{j}}~~~ \ar@{=}[d] \ar[rr] && ~~~\fY\lin{A} \ar@{=}[d]\\
\map_{\Sa}(\lim_{J}\fD{X}\lin{C_{j}},X) & \times &
\lim_{J}\fD{X}\lin{C_{j}}~~~ \ar[rr]^{\circ} && ~~~X
}
\end{equation*}
\noindent where the bottom map is composition in \w[.]{\CX}
\end{proof}

%
%c2    The Stover construction
%
\sect{The Stover construction}
\label{csc}

In \cite[\S 2]{StoV}, Stover showed  how one could use a simple construction in order to encode the homotopy type of a (pointed connected) topological space \w{Y\in\Ta} by means of wedges of spheres: this was later used in \cite{DKStE} to define cofibrant replacements in the $E^{2}$-model category of simplicial sets.

Stover's basic idea was to start with a wedge of $n$-spheres \w{\bS{n}_{(f)}} of \wb[,]{n\geq 1} one for each map \w[.]{f:\bS{n}\to Y} If $f$ is null-homotopic, attach a cone \w{C\bS{n}} to \w{\bS{n}_{(f)}} for each null-homotopy \w{F:f\sim\ast} of $f$. The resulting space \w{LY} is homotopy equivalent to a wedge of spheres (of various dimensions), and is equipped with the obvious ``tautological'' map \w[.]{\var:LY\to Y} Iterating this yields a simplicial resolution of $Y$: that is, a simplicial space \w{\Vd} whose realization \w{\|\Vd\|} is weakly equivalent to the original $Y$, with each \w{V_{n}} homotopy equivalent to a wedge of spheres.  Moreover, the cardinality of this wedge (pointed coproduct) is bounded by the number of $0$- and $1$-simplices in the mapping spaces \w[.]{\mapa(\bS{n},Y)}

It turns out that the functor $L$ factors through the mapping algebra functor \w{\fMA} (for \w[),]{A=\bS{1}} and in fact \w[,]{\Vd} and thus \w[,]{\|\Vd\|} is determined up to homotopy by the mapping algebra \w{\fMA Y}
(cf.\ \cite[\S 9]{BBlaC}) \wh that is, by the loop space \w[,]{\Omega Y} with the action of mapping spaces between spheres on it (cf.\ \S \ref{egmapalg}). This is not very surprising, since this action includes inter alia the \ww{A_{\infty}} structure on \w[,]{\Omega Y} from which we can recover $Y$ up to weak equivalence by the methods of \cite{StaH,MayG}.

However, the Stover construction can be applied to spaces other than \w{\bS{1}} and its suspensions (although it needs to be modified when we start with a non-co-$H$-space $A$, since in this case it is not enough to encode the null-homotopies \wh we need all (pointed) homotopies). Thus we may hope that the \Ama structure on \w[,]{\mapa(A,Y)} when described properly in terms of the actual spaces appearing in the Stover constructions (rather than the strict pointed coproducts and suspensions of Definition \ref{dmapalg}), will allow us to recover \w{\CWA Y} from it.

This motivates the following series of definitions:

\begin{defn}\label{deso}
Let $A$ be a cofibrant object in $\C$, as in \S \ref{sass}, and $\kappa$ a cardinal. An \emph{elementary $A$-Stover object} (for $\kappa$) is any colimit \w[,]{B=\colim_{I} X} where $I$ is an indexing category consisting of a disjoint union of:
\begin{enumerate}\renewcommand{\labelenumi}{(\alph{enumi})~}
\item A set of objects $S$ of cardinality $<\kappa$;
\item For each pair \w[,]{(u,v)\in S\times S} a (possibly
  empty) set of objects \w{T\li{u,v}} of cardinality $<\kappa$;
\end{enumerate}

For each object \w[,]{t\in T\li{u,v}} $I$ has two morphisms \w{i_{u}:u\to t}
and \w[.]{i_{v}:v\to t}

The diagram $X$ has \w{X(u)\cong A} for each \w[,]{u\in S}
\w{X(t)\cong A\otimes\Delta[1]} for each \w[,]{t\in T\li{u,v}} where
\w{X(i_{u})} is identified with
\w{i_{0}:A\otimes\{0\}\hra A\otimes\Delta[1]} and \w{X(i_{v})}
identified with \w{i_{1}:A\otimes\{1\}\hra A\otimes\Delta[1]}
for any \w[.]{(u,v)\in S\times S}

An \emph{$A$-Stover object} (for $\kappa$) is a coproduct in $\C$ of a set of cardinality \www{<\kappa} of elementary \ww{\Sigma^{i}A}-Stover objects for $\kappa$, for various objects \w[.]{i\geq 0}
\end{defn}

\begin{remark}\label{reso}
Note that if \w{\C=\Sa} (or \w[),]{\Ta} the colimits described above are in fact (pointed) homotopy colimits. We shall assume this is true of the pointed model category $\C$ in which we are working; this will make the construction of Section \ref{chiv} homotopy invariant.

Moreover, since \w{\Sa} and \w{\Ta} are pointed, the simplicial operation \w{X\otimes K} is given by the half-smash of simplicial sets (or topological spaces):
\begin{myeq}[\label{eqhalfsmash}]
X\otimes K~=~X\rtimes K~:=~(X\times K)/(\ast\times K)
\end{myeq}
\noindent (which is itself pointed).
\end{remark}

\begin{mysubsection}{The generalized Stover comonad}\label{sesc}
Let \w{\F=\F\sp{\St}} be the set of elementary Stover objects in
$\C$ (for $\kappa$), and \w{\TssA} the full
sub-simplicial category of \w{\C\op} generated by \w{\F\sp{\St}} under
suspensions and coproducts of cardinality \www[.]{<\kappa}
An \Fma $\fX$ will be called a \emph{Stover \Ama}
(see \S \ref{dmapalg}), the category
Stover \Ama[s] will be denoted by \w[,]{\MsAs} and the discrete version
(\S \ref{ddiscma}) by \w[.]{\MsAsd} The corresponding mapping functors
are \w{\fMAs:\C\to\MsAs} and \w[,]{\fMAsd:\C\to\MsAsd} respectively.

This allows us to define the \emph{generalized Stover comonad}
\w{L_{A}:\C\to\C} by:
\begin{myeq}[\label{eqesc}]
L_{A}Y~:=~\coprod _{i\geq 0}\ \coprod_{\phi\in[\Sigma^{i}A,Y]}\ E_{\phi}~,
\end{myeq}
\noindent for any \w[,]{Y\in\C} where for each \w{B:=\Sigma^{i}A}
\wb[,]{i\geq 0} and homotopy class
\w{\phi\in[B,Y]=\pi_{0}\mapa(B,Y)} in \w[,]{\ho\C} the elementary $B$-Stover object \w{E_{\phi}=\colim_{I} X} is defined as in \S \ref{deso} with:

\begin{enumerate}
\renewcommand{\labelenumi}{(\alph{enumi})~}
\item The set $S$ consisting of all \w{f\in\map_{\C}(B,Y)_{0}} with
  \w[;]{[f]=\phi}
\item For each pair \w[,]{(f,g)\in S\times S} the set \w{T\li{f,g}}
  consisting of all homotopies
  \w{F\in\map_{\C}(B,Y)_{1}=\Hom(B\rtimes\Delta[1],Y)} with
  \w{d_{0}F=f} and \w[.]{d_{1}F=g}
\end{enumerate}
\end{mysubsection}

\begin{remark}\label{rescm}
When $A$ is a homotopy cogroup object (e.g., suspensions), we can make do with the original Stover construction, as defined in \cite[\S 2]{StoV}, where for any \Ama $\fY$:
\begin{myeq}[\label{eqstovc}]
L\fY~:=~\ \coprod _{i\geq 0} \
     \coprod _{\phi\in \fY\lin{\sigma^{i}A}_{0}} \ \colim \left(\sigma^{i}A\li{\phi}\xra{\inc}
                  (C\sigma^{i}A\li{\Phi})_{\Phi\in \fY\lin{\sigma^{i}A}_{1},
     d_{0}\Phi=\phi, d_{1}\phi=0}\right)~,
\end{myeq}
\noindent that is, we use only cones indexed by null-homotopies,
instead cylinders indexed by arbitrary homotopies (see also
\cite[\S 9]{BBlaC}). We identify the copy of \w{\sigma^{i}A} indexed by the $0$
map itself with $\ast$, so that the colimit of cones attached to it
becomes a coproduct of copies of \w[,]{\sigma^{i+1}A} which we identify
with the appropriate summands associated to \w[.]{\fY\lin{\sigma^{i}A}_{0}}

Note, however, that this will not work without the cogroup condition,
so that for arbitrary $A$ we need the generalized version defined above.
\end{remark}

\begin{defn}\label{dtdsalg}
Let $J$ be the category having two objects \w{j,j'} and two non-identity
maps \w[,]{\phi,\psi:j\to j'} and define a truncation functor
\w{\rho:\Ss\to\Set^{J}} by \w[.]{\rho K:=\{d_{0},d_{1}:K_{1}\to K_{0}\}}
A \emph{truncated discrete Stover \Ama} is a function \w[,]{\fX:\Obj(\TssA)\to\Set^{J}} equipped natural embeddings
\w{\fX\lin{\Sigma A}_{0}\to\fX\lin{A}_{1}} as in \wref[.]{eqdtheta}
\end{defn}

%
%  Lemma:  generalized Stover comonad
%
\begin{lemma}\label{lescm}
The functor \w{L_{A}:\C\to\C} is a comonad.
\end{lemma}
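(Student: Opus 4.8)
The plan is to realize $L_A$ as the comonad associated to an adjunction, so that the counit, comultiplication, and all coherence come for free from the adjunction rather than from a hand verification of the comonad identities. The relevant adjunction is between $\C$ and the category $\MsAs$ of Stover \Ama[s], with the Stover mapping algebra functor $\fMAs:\C\to\MsAs$ serving as the \emph{right} adjoint. If $R:\MsAs\to\C$ denotes a left adjoint to $\fMAs$, then $R\circ\fMAs:\C\to\C$ is a comonad in the standard way (counit the adjunction counit $R\fMAs\to\Id_{\C}$, and comultiplication $R\eta\fMAs$ for $\eta:\Id_{\MsAs}\to\fMAs R$ the unit), and the whole task reduces to identifying this composite with $L_A$.

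First I would construct the left adjoint $R$. On a free Stover \Ama $\fMF B$ (for $B$ a generator of $\TssA$, that is, an elementary Stover object and its suspensions and coproducts) I set $R(\fMF B):=B$; Lemma \ref{lfreema} then supplies the adjunction bijection on free algebras, since
\[
\map_{\C}(R\,\fMF B,\,Y)~=~\map_{\C}(B,Y)~=~(\fMAs Y)\lin{B}~\cong~\map_{\MsAs}(\fMF B,\,\fMAs Y).
\]
Because $\C$ is cocomplete (\S\ref{sass}) and every Stover \Ama is canonically a colimit of free ones, this extends by a coend over $\TssA$ to a functor $R:\MsAs\to\C$ left adjoint to $\fMAs$. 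Next I would identify $R\circ\fMAs$ with $L_A$: unwinding the coend for $R$ applied to the realizable algebra $\fMAs Y$ reproduces exactly the coproduct \eqref{eqesc}, since the generators $\Sigma^{i}A$ are indexed by the $0$-simplices $f\in\map_{\C}(\Sigma^{i}A,Y)_{0}$ (the set $S$ of \S\ref{deso}), the cylinders $\Sigma^{i}A\otimes\Delta[1]$ by the $1$-simplices, i.e.\ the homotopies $F$ (the sets $T\li{f,g}$), with the two inclusions recording $d_{0}F$ and $d_{1}F$, and these generators are grouped by homotopy class $\phi\in[\Sigma^{i}A,Y]$ into the elementary Stover objects $E_{\phi}$. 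Thus $R(\fMAs Y)\cong L_A Y$ naturally in $Y$, completing the argument.

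It is worth recording the resulting structure concretely, since later constructions use it. The counit $\var:L_A Y\to Y$ is the tautological map of \S\ref{csc}: on the summand indexed by $f$ it is $f$ itself, on the cylinder indexed by a homotopy $F$ it is $F$, and these agree on faces and so descend to each $E_{\phi}$. The comultiplication $L_A Y\to L_A L_A Y$ sends each tautological generator $\Sigma^{i}A\hookrightarrow L_A Y$ (indexed by $f$) to the generator of $L_A L_A Y$ indexed by that very inclusion, now regarded as a $0$-simplex of $\map_{\C}(\Sigma^{i}A,L_A Y)$, and similarly sends each cylinder to the generator indexed by its inclusion as a $1$-simplex. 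The counit laws then say precisely that evaluating a tautological index through $\var$ returns the original map, while coassociativity is the compatibility of this reindexing at two successive stages.

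The main obstacle is the first step: producing the left adjoint $R$ honestly in the simplicially enriched setting and checking that the coend respects the defining conditions of a Stover \Ama (preservation of coproducts and suspensions, \eqref{eqcoploop}), together with the bookkeeping of the homotopy-class grouping $\phi\in[\Sigma^{i}A,Y]$, which is implicit in \eqref{eqesc} but must be matched with the coend summand by summand. Should a clean left adjoint prove awkward to write down in full generality, the fallback is to take $\var$ and the reindexing comultiplication above as the definitions of the comonad structure and to verify the three comonad identities directly; this is routine but requires separately tracking the sphere generators (the sets $S$) and the cylinder generators (the sets $T\li{f,g}$) through \eqref{eqesc}.
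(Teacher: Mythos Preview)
Your overall strategy---exhibiting $L_{A}$ as the comonad associated to an adjunction---is exactly the paper's approach, and your concrete description of the counit and comultiplication agrees with Remark~\ref{rcounit}. The gap is in the choice of adjunction. You propose the full simplicially enriched right adjoint $\fMAs:\C\to\MsAs$, but the paper instead factors $L_{A}$ through the \emph{truncated discrete} Stover \Ama functor $\rho\circ\fMAsd$ of Definition~\ref{dtdsalg}: from \wref{eqesc} one sees that $L_{A}Y$ depends only on the $0$- and $1$-simplices of the mapping spaces $\map_{\C}(\Sigma^{i}A,Y)$, so $L_{A}=\tLA\circ\rho\circ\fMAsd$ for a suitable $\tLA$, and it is $\tLA\dashv\rho\fMAsd$ that one checks. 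With this adjunction, the identification $\tLA(\rho\fMAsd Y)=L_{A}Y$ is tautological---the indexing sets $S$ and $T\li{f,g}$ of \S\ref{deso} are literally the values of $\rho\fMAsd Y$---and the adjunction itself is easy (the student's fallback ``routine bookkeeping'' becomes the main argument).

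Your route runs into trouble precisely at the identification $R\circ\fMAs\cong L_{A}$. A left adjoint $R$ to the enriched functor $\fMAs$ would be an enriched coend over $\TssA$ and would see \emph{all} simplices of $(\fMAs Y)\lin{B}$, not just the $1$-truncation; your ``unwinding'' quietly discards the contributions of $2$-simplices and higher without justification. Either those higher simplices impose further identifications (so $R\fMAs Y\not\cong L_{A}Y$), or you must argue they don't, which amounts to showing that $\fMAs$ factors through $\rho\fMAsd$ for these purposes---and at that point you are back to the paper's adjunction. The fix is simply to replace $\fMAs$ by $\rho\fMAsd$ from the outset; everything else in your outline then goes through.
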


\begin{proof}
Note that \w{L_{A}} factors as the composite
\w[,]{\tLA\circ\rho\circ\fMAsd} since from \wwref{eqesc} \textit{ff.}
we see that \w{L_{A}Y} depends only on the $1$-skeleton of the
discrete Stover \Ama \w[.]{\fMAsd Y} It is readily verified that
\w{\tLA} is left adjoint to the truncated discrete Stover \Ama functor \w{\rho\fMAsd} (compare \cite[Proposition 9.12]{BBlaC}).
\end{proof}

\begin{remark}\label{rcounit}
The counit $\var$ for this comonad is the tautological map
\w[,]{\var_{Y}:L_{A}Y\to Y} which sends the copy of $B$ indexed by
$\phi$ to $Y$ via the map $\phi$, and the copy of \w{B\rtimes\Delta[1]}
indexed by $\Phi$ to $Y$ via $\Phi$ (in the notation of \S \ref{sesc}).
The comultiplication \w{\mu:L_{A}Y\to L_{A}L_{A}Y} may also be
described explicitly.
\end{remark}

\begin{mysubsection}{The hierarchy of \ma[s]}\label{shma}
So far we have described a hierarchy of notions of \ma[s,] each needed at a different stage in our reconstruction process:

\begin{enumerate}
\renewcommand{\labelenumi}{(\alph{enumi})~}
\item The truncated discrete Stover \Ama \w{\rho\fMAsd Y} contain the minimal amount of information needed to define the Stover construction \w[,]{L_{A}Y} and thus ``resolve'' $Y$ by wedges of half-suspensions of $A$.
\item The various discrete \Ama[s,] such as \w[,]{\fMAsd Y} can be recovered from \w{X:=\mapa(A,Y)} alone by means of appropriate (homotopy) limits.
\item Ordinary \Ama[s] such as \w{\fMA Y} (with variants such as the Stover \Ama \w[)]{\fMAs Y} contain a great deal of additional information about
\w[.]{X:=\mapa(A,Y)} However, these suffice to reconstruct $Y$ only for very special choices of $A$, such as wedges of spheres in \w{\Ta}  (see
\cite{BadzA,BDoraL,BBlaC} and \S \ref{slsr} below).
\item In general, in order to recover $Y$ (up to $A$-equivalence) we will have to allow additional (homotopy) colimits in define suitable \emph{extended} \Ama[s,] with respect to an extended theory \w[,]{\bT\sp{\II}\sb{A}} as in \S \ref{sema}. However, we have to be careful in making these choices, so as to guarantee that \w{\bT\sp{\II}\sb{A}} is small as an enriched category (as well as being as economical as possible in terms of the extra structure imposed on $X$), on the one hand \wh but at the same time sufficiently large to permit recovering $Y$. The necessary choices are described in \S \ref{sia} below.
\end{enumerate}

\end{mysubsection}

%
%c3   Monads and algebras
%
\sect{Monads and algebras}
\label{cmona}

Given a monad \w{\Theta:\C\to\C} over any category $\C$, we can define
algebras over $\Theta$ as in \cite[VI, \S 2]{MacLC}, as an alternative to ``universal algebras'', described in terms of operations and relations.
Unfortunately, the functor
\w{\fTAsd:=\fMAsd\circ\tLA\circ\rho:\MsAsd\to\MsAsd} is not
a monad on discrete Stover \Ama[s.] However, it still fits into the following framework:

\begin{defn}\label{dlalg}
Let \w{\TsF} be some sub-simplicial category of $\C$ as in \S \ref{dmapalg} (possibly discrete), such that \w[,]{\F\sp{\St}\subseteq\F} and let \w{\hr:\MsF\to\rho\MsAsd} be the induced forgetful functor from \Fma[s] to truncated discrete Stover \Ama[s] (for some fixed $\kappa$). Let \w{\fTF} denote the endofunctor \w[.]{\fMF\circ\tLA\circ\hr:\MsF\to\MsF}
A \ww{\fTF}-\emph{algebra} is an \Fma $\fX$ equipped with a map of \Fma[s] \w{h_{\fX}:\fTF\fX\to\fX} making
the diagram
\mydiagram[\label{eqalgebra}]{
\fTF\fTF\fX \ar[rr]^<<<<<<<<{\fMF\var_{\tLA\hr\fX}}
\ar[d]_{\fTF(h_{\fX})} && \fTF\fX \ar[d]_{h_{\fX}} \\
\fTF\fX \ar[rr]_{h_{\fX}} && \fX
}
\noindent commute, with:
\begin{myeq}[\label{eqsplit}]
\hr h_{\fX}\circ\eta_{\fX}~=~\Id_{\hr\fX}~,
\end{myeq}
\noindent where

\begin{myeq}[\label{equnit}]
\eta_{\fX}:\hr\fX~\llra~\hr\fTF\fX~=~\hr\fMF\tLA\hr\fX~=~\rho\fMAsd\tLA\hr\fX
\end{myeq}
\noindent is the unit of the monad \w{\rho\circ\fMAsd\circ\tLA:\rho\MsAsd\to\rho\MsAsd} on truncated discrete Stover \Ama[s.]
\end{defn}

\begin{remark}\label{rlalg}
Note that if \w{\fX=\fMF Y} for some \w[,]{Y\in\C} then:
\mydiagram[\label{eqcoalgebra}]{
L_{A}L_{A}Y \ar[rr]^<<<<<<<<{\var_{\tLA\hr\fX}}
\ar[d]_{L_{A}(\var_{\fX})} &&  L_{A}Y \ar[d]_{\var_{Y}} \\
L_{A}Y \ar[rr]_{\var_{Y}} && Y
}
\noindent commutes by Lemma \ref{lescm} and \cite[VI, \S 1]{MacLC}. Thus
$$
h_{\fX}:=\fMF(\var_{Y}):\fMF L_{A}Y\to\fMF Y
$$
\noindent indeed makes \wref{eqalgebra} commute, by applying \w{\fMF} to
\wref[;]{eqcoalgebra} and \wref{eqsplit} also holds (cf.\ \cite[VI, \S 2]{MacLC}).
\end{remark}

%
%   Proposition:   mapping algebras ==> algebra over monad
%
\begin{prop}\label{ptalg}
For \w{\F\sp{\St}\subseteq\F} as above, every \Fma $\fX$ has a natural
\ww{\fTF}-algebra structure map of \Fma[s] \w[.]{h_{\fX}:\fTF\fX\to\fX}
\end{prop}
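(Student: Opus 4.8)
The plan is to reduce the construction of $h_\fX$ to a single application of the strong Yoneda isomorphism (Lemma \ref{lfreema}), and then to verify the two axioms by transporting the comonad identities of Remark \ref{rlalg} from realizable mapping algebras to arbitrary ones. First I would observe that, since $\F\sp{\St}\subseteq\F$, the object $\tLA\hr\fX$ lies in $\TsF$: it is a coproduct of suspensions of elementary Stover objects, each of which is in $\F\sp{\St}$ by construction. Hence $\fTF\fX=\fMF(\tLA\hr\fX)$ is a \emph{free} $\F$-mapping algebra, and by Lemma \ref{lfreema} a structure map $h_\fX\colon\fTF\fX\to\fX$ is the same datum as a single $0$-simplex $\hat h_\fX\in\fX\lin{\tLA\hr\fX}_{0}$. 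Using the product decomposition \eqref{eqcoploop}, this amounts to choosing, for every $i\geq 0$ and every homotopy class $\phi$, a ``tautological'' element $x_\phi\in\fX\lin{E_\phi}_{0}$.

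The crux is constructing $x_\phi$ for a \emph{non-realizable} $\fX$. The elementary Stover object is $E_\phi=\colim_{I}X\cong B\rtimes W$, where $B=\Sigma^{i}A$ and $W$ is the graph with vertex set $S$ and an edge for each homotopy in $T$; the defining data of $\fX$ supply a compatible family consisting of the $0$-simplices $f\in\fX\lin{B}_{0}$ (for $f\in S$) together with the $1$-simplices recording the homotopies. I would build $x_\phi$ as a lift of this family through the restriction map $\fX\lin{E_\phi}\to\prod_{f\in S}\fX\lin{B}$ induced by the $0$-skeleton inclusion $\coprod_{f}B\hookrightarrow E_\phi$. The point is that a general $\F$-mapping algebra does \emph{not} send the colimit $E_\phi$ to the corresponding limit --- that is precisely the extra property enjoyed only by \emph{extended} mapping algebras --- so one cannot simply invoke the universal property of the colimit. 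Instead I would exploit two facts: that $E_\phi$ is homotopy equivalent to a wedge of $B$ and suspensions $\Sigma B$, so that $\fX$, being a simplicial functor (hence preserving simplicial homotopy equivalences) and satisfying \eqref{eqcoploop}, determines the homotopy type of $\fX\lin{E_\phi}$ and places the family $(f)_{f\in S}$ in the image up to homotopy; and that the skeletal inclusion is a cofibration, so that $\fX$ carries it to a fibration of Kan complexes, along which the homotopy-level lift can be rigidified to an exact one.

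With $h_\fX$ in hand, the verifications are formal. The split condition \eqref{eqsplit} holds because, by construction, $x_\phi$ restricts to $f$ along the copy inclusion $\iota_f\colon B\to E_\phi$, so that $\hr h_\fX\circ\eta_\fX$ is the identity on each generator of $\hr\fX$. For the associativity square \eqref{eqalgebra} I would note that $\fTF\fTF\fX$ and $\fTF\fX$ are again free, so by Lemma \ref{lfreema} it suffices to compare the two composites on the corresponding Yoneda elements; this reduces, exactly as in Remark \ref{rlalg}, to the coassociativity and counit identities of the comonad $L_{A}$ established in Lemma \ref{lescm}, now applied to the object $Y_{0}:=\tLA\hr\fX$. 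Naturality of $h_\fX$ in $\fX$ follows since every step --- the identification of $\fTF\fX$ as a free algebra, the decomposition \eqref{eqcoploop}, and the lifting that produces $x_\phi$ --- is natural in the truncated data $\hr\fX$.

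The main obstacle is the middle step: producing the tautological structure element $x_\phi$ for arbitrary $\fX$, with the \emph{exact} restrictions demanded by the split condition \eqref{eqsplit}. For realizable $\fX=\fMF Y$ this is immediate (it is the restriction of the counit $\var_{Y}$, cf.\ Remark \ref{rcounit}), but for general $\fX$ one must replace the missing universal property of the colimit $E_\phi$ by the combination of \eqref{eqcoploop}, the simplicial homotopy-invariance of $\fX$, the weak-wedge structure of the Stover objects, and a fibrancy/lifting argument. Controlling the passage from a homotopy-level lift to one satisfying the strict identities in $\rho\MsAsd$ is the delicate part.
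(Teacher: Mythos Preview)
Your overall strategy matches the paper's: both reduce via Lemma~\ref{lfreema} to producing a single Yoneda element in $\fX\lin{\tLA\hr\fX}_0$, then decompose along the coproduct \wref{eqcoploop} to the problem of specifying $h_\phi\in\fX\lin{E_\phi}_0$ for each elementary Stover summand. You are also right to flag that, for a general $\F$-mapping algebra, nothing in Definition~\ref{dmapalg} forces $\fX$ to carry the colimit defining $E_\phi$ to a limit, so the phrase ``determined by the tautological elements $f$'' in the paper's argument is terse. In practice the proposition is only invoked (in \S\ref{svih} and in the proof of Theorem~\ref{treal}) for \emph{extended} mapping algebras, where the limit condition \wref{eqcolim} holds by fiat and the tautological $h_\phi$ exists on the nose.

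However, your proposed workaround has two genuine gaps. First, the lifting step assumes that $\fX$ sends the cofibration $\coprod_{f\in S}B\hookrightarrow E_\phi$ to a fibration. This is true for realizable $\fX=\fMF Y$ by SM7, but an arbitrary simplicial functor $\TsF\op\to\Sk$ need not have this property; preservation of simplicial homotopy equivalences gives you the lift only up to homotopy, and you acknowledge yourself that rigidifying it to satisfy the strict split condition \wref{eqsplit} is ``the delicate part'' --- in fact there is no mechanism here to do it. Second, your verification of the square \wref{eqalgebra} cannot be reduced ``exactly as in Remark~\ref{rlalg}'' to the comonad identities of $L_A$ applied to $Y_0:=\tLA\hr\fX$: that remark requires a counit $\var_Y\colon L_AY\to Y$ in $\C$, whereas for non-realizable $\fX$ there is no such $Y$ and $h_\fX$ is not $\fMF$ of any map. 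The paper instead performs an explicit element-chase: starting from a generator $j\colon B\to E_\phi\hookrightarrow C$ (with $C=\tLA\hr\fTF\fX$), it tracks both routes around \wref{eqalgebra} and shows each lands on the same element $g\circ f\in\fX\lin{B}$, using only the simplicial functoriality of $\fX$ and the explicit description of $\hat h_\fX$ in terms of the tautological $f$'s and $F$'s. This chase is the substance of the argument and cannot be replaced by an appeal to comonad coassociativity.
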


\begin{proof}
First, we show that there is a natural map
\w[,]{h=h_{\fX}:\fTF\fX\to\fX} determined by the
pre-compositions with maps in \w[:]{\TssA} since \w{B:=\tLA\rho\fX} is
in \w[,]{\TssA\subseteq\TsF} the \Fma \w{\fMF\tLA\hr\fX} overlying \w{\fTF\fX} is free, so by Lemma \ref{lfreema}, in order to  define $h$ as a
map of \Fma[s] we just need to choose an element in \w[.]{\fX\lin{B}_{0}}
Since $B$ is a coproduct, by \wref[,]{eqesc} we need elements \w{h_{\phi}}
in \w{\hr\fX\lin{E_{\phi}}_{0}} for each \w[,]{\phi\in\pi_{0}\hr\fX\lin{A}}
and since \w{E_{\phi}} is in turn a colimit, \w{h_{\phi}} is
determined by the tautological elements \w{f\in\hr\fX\lin{A'}_{0}=\fX\lin{A}_{0}} for each $f$
representing $\phi$.

To check that \wref{eqalgebra} commutes, note that
\w{C:=\tLA\hr\fTF\fX} is again in \w[,]{\TssA\subseteq\TsF} so the \Fma \w{\fTF\fTF\fX} is again free, and we need only check that
\w{\fTF\fTF\fX\lin{C}} is sent to the same place in \w[.]{\fX\lin{C}} Again,
$C$ is a coproduct of objects of the form \w[,]{E_{\phi}} which are in turn
colimits of objects $B$ or \w{B\rtimes\Delta[1]} with \w[.]{B=\Sigma^{k}A}
Each such $B$ is indexed by a map \w{f:B\to \bigvee_{i}~E_{\phi_{i}}\subseteq\tLA\hr\fX}
(or \w[),]{F:B\rtimes\Delta[1]\to \bigvee_{i} E_{\phi_{i}}} with \w{E_{\phi_{i}}}
itself a colimit of objects \w{B_{ik}=\Sigma^{n_{ik}}A} (or \w[)]{B_{ik}\rtimes\Delta[1]}
indexed by ``formal maps'' \w{f_{ik}\in\fX\lin{B_{ji}}} (or
\w[).]{F_{ik}\in\fX\lin{B_{ik}\rtimes\Delta[1]}} Together the maps \w{f_{ik}}
and \w{F_{ik}} (for all $j$) induce a ``formal map''
\w{g_{i}\in\fX\lin{E_{\phi_{i}}}} (for each $i$), and thus a single element
\w[.]{g\in\fX\lin{\bigvee_{i} E_{\phi_{i}}}}

Now if \w{j:B\to E_{\phi}\hra C} is the structure map for the colimit, followed
by the inclusion, thought of as a $0$-simplex in \w[,]{(\fTF\fTF\fX)\lin{B}}
then \w{\fMF\var_{\tLA\hr\fX}} (the top horizontal map in
\wref[)]{eqalgebra} takes $j$ to the map
\w[,]{f:B\to \bigvee_{i} E_{\phi_{i}}} thought of as a $0$-simplex in
\w[.]{(\fMF\tLA\hr \fX)\in{B}} Then \w{h_{\fX}} (the right vertical
map in \wref[)]{eqalgebra} takes $f$ to \w[,]{g\circ f\in\fX\lin{B}}
since $h$ is a map of Stover \Ama[s] by construction.

On the other hand, \w{\fTF(h_{\fX})} (the left vertical map in
\wref[)]{eqalgebra} takes $j$ to the map \w{\iota_{B}} identifying $B$ with
the copy of $B$ in the colimit \w{\tLA\hr\fX} indexed by the map
\w{g\circ f\in\fX\lin{B}} (by definition of the functor \w[,]{\fMF} and
thus of \w[),]{\fTF} and then \w{h_{\fX}} (the bottom horizontal map in
\wref[)]{eqalgebra} takes \w{\iota_{B}} to \w{g\circ f\in\fX\lin{B}}
by the definition above. Thus we see that \wref{eqalgebra} commutes on generators, and since all maps are maps of \Fma[s,] the diagram commutes.

To verify \wref[,]{eqsplit}] by adjunction it suffices to check that
$$
\tLA\hr h_{\fX}\circ\tLA\eta_{\fX}~=~\Id_{\tLA\hr\fX}~;
$$
\noindent indeed,  given \w[,]{f\in\hr\fX\lin{B}_{0}}
\w{\tLA\eta_{\fX}} takes the copy \w{B_{f}} of $B$ in \w{\tLA\hr\fX} isomorphically to the copy of \w{B_{\iota}} in \w{\tLA\rho\fMAsd\tLA\hr\fX} indexed by the inclusion \w[,]{\iota:B_{f}\hra \tLA\hr\fX} which
\w{\tLA\hr h_{\fX}} maps back to \w{B_{f}} by construction. Similarly for
\w[.]{F\in\hr\fX\lin{B}_{1}}
\end{proof}

\begin{remark}\label{rptalg}
The converse of Proposition \ref{ptalg} is also true, but as we shall not need this fact, we omit the proof.
\end{remark}

%
%c4   The inductive colimit construction
%
\sect{The inductive colimit construction}
\label{cicc}

We now describe a transfinite inductive process for recovering $Y$ (up to $A$-equivalence) from the mapping space \w[,]{X=\map(A,Y)} equipped with the extra structure defined in the previous sections.

\begin{mysubsection}{Induction assumptions}
\label{sia}
For any ordinal \w[,]{\alpha<\lambda} at the $\alpha$-th stage of the
induction we assume that we have constructed the previous
approximations \w{\qY{\beta}\in\C} for \w{\beta<\alpha} and chosen a cardinal \w[,]{\kappa_{\alpha}} as well as a set \w{\qF{\alpha}} of objects in $\C$, containing \w[,]{\bigcup_{\beta<\alpha}\,\qF{\beta}} all $A$-Stover objects for \w{\kappa_{\alpha}} (\S \ref{deso}), and closed under:

\begin{enumerate}
\renewcommand{\labelenumi}{(\alph{enumi})~}
\item Suspensions and half-suspensions (cf.\ \wref[):]{eqhalfsmash}
\item Coproducts of cardinality $\leq\kappa_{\alpha}$;
\item Taking \w{D'} in the functorial factorization \w{C\to D'\to D} of any map \w{C\to D} in \w{\qf{\alpha}} as a cofibration followed by an acyclic fibration.
\item Pushouts of diagrams \w{Z\leftarrow X\xra{i}Y}
when $i$ is a cofibration between $A$-Stover objects for
\w{\kappa_{\beta}}  and $Z$ is in \w[,]{\qF{\beta}} for \w[.]{\beta<\alpha}
\item Colimits of sequences of cofibrations in \w{\bigcup_{\beta<\alpha}\,\qF{\beta}}
of length $<\alpha$.
\end{enumerate}

Note that all the limits in question are homotopy colimits. and all objects are cofibrant (see Remark \ref{reso}).
\end{mysubsection}

\begin{defn}\label{dalphaext}
For any ordinal \w[,]{\alpha\leq\lambda} we let \w{\CasA{\alpha}} denote the sub-simplicial category of $\C$ with object set \w[,]{\qF{\alpha}} \w{\MsqF{\alpha}} the corresponding category of extended \Ama[s] 
(cf.\ \S \ref{deama}), and \w{\fMqF{\alpha}:\C\to\MsqF{\alpha}} the mapping algebra functor.
\end{defn}

\begin{remark}\label{rcardinal}
The cardinal \w{\kappa_{\alpha}} is determined inductively by letting
\w{\kappa_{0}:=|X|^{+}} be the least cardinal greater than the cardinality of the
truncated simplicial set \w[,]{\rho X} and thus a bound on the coproducts appearing in the $A$-Stover construction \w{\tLA\rho\fX} (in the notation of the proof of Lemma \ref{lescm}). In general, \w{\kappa_{\alpha+1}} is the least upper bound on the cardinality of \w{\rho\fMA Y} for all \w[.]{Y\in\qF{\alpha}} For a limit ordinal $\alpha$ we set \w[.]{\kappa_{\alpha}:=\sup_{\beta<\alpha}\,\kappa_{\beta}}
We assume for simplicity that \w[.]{\lambda\leq \kappa_{0}}
\end{remark}

\begin{remark}\label{rrealizable}
Of course, any \emph{realizable} \Ama \w{\fMA Z} extends uniquely
to an \qFma{\beta}\ \w{\fMqF{\beta} Z} for any
\w[;]{\beta\leq\lambda} in particular, we write \w{\fTqF{\beta}} for
\w{\fMqF{\beta}\circ\tLA:\rho\MsA\to\MsqF{\beta}} (no longer a monad).

The main additional hypothesis needed to realize the original $\fX$ is
that it can be extended to an \qFma{\beta}\ $\tfX$ for each
\w[.]{\beta\leq\alpha} Of course, this is trivially true for the
\emph{discrete} \Ama $\fX$, and thus the \ww{\fTA}-algebra structure
map \w{h_{\fX}:\fTA\fX\to\fX} extends canonically to
\w[.]{\tet{\fX}:\fTqF{\beta}\fX\to\tfX}

We can then make sense of the requirement that we have a map of
\qFma{\beta}s \w{\qf{\beta}:\fTqF{\beta}\qY{\beta}\to\tfX} and a map
\w{\qp{\beta}:\tLA\rho\fX\to\qY{\beta}} in $\C$ for each
\w[,]{\beta\leq\alpha} as well as a cofibration
\w{\qi{\beta}:\qY{\beta}\to\qY{\beta+1}} in $\C$ such that
\begin{myeq}\label{eqexpo}
\qf{\beta+1}\circ\fMqF{\beta+1}\qi{\beta}~=~\qf{\beta}\hs\text{and}\hs
\qf{\beta+1}\circ\fMqF{\beta+1}\qp{\beta+1}~=~\tet{\fX}
\end{myeq}
\noindent for each \w[,]{\beta<\alpha} and
\begin{myeq}\label{eqexp}
\qf{\beta}\circ\fMqF{\beta}\var_{\qY{\beta}}~=~
\tet{\fX}\circ\fTqF{\beta}\qf{\beta}
\end{myeq}
\noindent for each \w[.]{\beta\leq\alpha}
\end{remark}

\begin{mysubsection}{The inductive construction}
\label{sic}
We start with \w{\qY{0}:=\ast\in\C} as the $0$-th approximation to
$Y$, with \w[.]{\qF{0}=\{A\}} Assume by induction that we have obtained the above structure for some \w[.]{\alpha<\lambda}  Now factor
\w{\tLA\rho\qf{\alpha}:\tLA\rho\fMA\qY{\alpha}\to\tLA\rho\fX}
(functorially) as
\mydiagram[\label{eqfactorize}]{
\tLA\rho\fMA\qY{\alpha}~~\ar@{^{(}->}[rr]^-{\wqf{\alpha}} &&
~~\wLA{\fX}~~ \ar@{->>}[rr]_{\simeq}^{\ell\q{\alpha}} &&
~~\tLA\rho\fX
}
\noindent with \w{\wqf{\alpha}} a cofibration and \w{\ell\q{\alpha}} a trivial fibration. Because \w{\tLA\rho\fX} is cofibrant, \w{\ell\q{\alpha}} has a section \w{j\q{\alpha}:\tLA\rho\fX\to\wLA{\fX}} (which is also a weak equivalence), with
\begin{myeq}\label{eqretract}
\ell\q{\alpha}\circ j\q{\alpha}~=~\Id_{\tLA\rho\fX}~.
\end{myeq}

We now define \w[,]{\qY{\alpha+1}} \w[,]{\qp{\alpha+1}} and \w{\qi{\alpha}} by the pushout diagram:
\mydiagram[\label{eqmapo}]{
\ar @{} [drrr] |>>>>>>>{\framebox{\scriptsize{PO}}}
L_{A}\qY{\alpha}~\ar[d]^{\var_{\qY{\alpha}}}~~ \ar@{=}[r] &
~\tLA\rho\fMA\qY{\alpha} \ar@{^{(}->}[rr]^-{\wqf{\alpha}} &&
\wLA{\fX} \ar[d]^{\qp{\alpha+1}}\\
\qY{\alpha}~~  \ar@{^{(}->}[rrr]_{\qi{\alpha}} &&& ~~\tqY{\alpha+1}
}
\noindent in $\C$ (which is thus a homotopy push out).

Since all but the lower left corner of \wref{eqmapo} are in
\w[,]{\qF{\alpha}\subseteq\qF{\alpha+1}} the pushout \w{\qY{\alpha+1}}
is in \w[,]{\qF{\alpha+1}} by definition.
Therefore, if we apply the \qFma{\alpha+1}-functor \w{\fMqF{\alpha+1}}
to \wref[,]{eqmapo} we have a pushout diagram in
\w[,]{\M\sb{\qF{\alpha+1}}} by Lemma \ref{lfreema}.

Moreover, by \wref{eqexpo} the following square commutes in \w[:]{\MsqF{\alpha}}
\mydiagram[\label{eqmasq}]{
\fTqF{\alpha}\fMqF{\alpha}\qY{\alpha} \ar[d]^{\fMqF{\alpha}\var_{\qY{\alpha}}}
\ar[rr]^{\fMqF{\alpha}\wqf{\alpha}} \ar@/^{2.5pc}/[rrrrr]^{\fTqF{\alpha}\qf{\alpha}} &&
~\fMqF{\alpha}\wLA{\fX}~~ \ar[rr]^{\fMqF{\alpha}\ell\q{\alpha}} &&
\fTqF{\alpha}\fX \ar@{=}[r] &
\fL_{A}\fX \ar[d]^{\tet{\fX}}\\
\fMqF{\alpha}\qY{\alpha}  \ar[rrrrr]_{\qf{\alpha}} &&&&& \tfX,
}
\noindent and since all but the lower right corner are free
\qFma{\alpha}s, \wref{eqmasq} extends automatically to a commuting
square in \w{\MsqF{\alpha+1}} (see \S \ref{sema}).

Therefore, there is a dotted map of \qFma{\alpha+1}s making
the following diagram commute:
\mydiagram[\label{eqinmapo}]{
\fTqF{\alpha+1}\fMqF{\alpha+1}\qY{\alpha}
\ar[d]^{\fMqF{\alpha+1}\var_{\qY{\alpha}}}
\ar[rrr]^-{\fMqF{\alpha+1}\wqf{\alpha}} &&&
\fMqF{\alpha+1}\wLA{\fX} \ar@/^/[ddrr]^{\tet{\fX}\circ\fMqF{\alpha+1}\ell\q{\alpha}}
\ar[d]_{\fMqF{\alpha+1}\qp{\alpha+1}}\\
\fMqF{\alpha+1}\qY{\alpha}  \ar[rrr]^{\fMqF{\alpha+1}\qi{\alpha}}
\ar@/_/[drrrrr]_{\qf{\alpha}} &&&
\fMqF{\alpha+1}\qY{\alpha+1} \ar@{.>}[drr]_<<<<<{\qf{\alpha+1}} & \\
&&&&& \hso\tfX~.
}
\noindent This completes the induction step, by defining \w[.]{\qf{\alpha+1}}
\end{mysubsection}

\begin{mysubsection}{Verifying the induction hypothesis}
\label{svih}
By construction \wref{eqexpo} holds for \w[;]{\alpha+1} to see that \wref{eqexp} holds, too, consider the following diagram of \qFma{\alpha+1}s:
$$
\xymatrix@R=25pt{
%\mydiagram[\label{eqalphplus}]{
%
\fTqF{\alpha+1}\fMqF{\alpha+1}\qY{\alpha+1}
\ar[ddd]^{\fMqF{\alpha+1}\var_{\qY{\alpha+1}}}
\ar[rrrrrr]^{\fTqF{\alpha+1}\qf{\alpha+1}}
&&&&&& \fTqF{\alpha+1}\fX \ar[ddd]^{\tet{\fX}} \\
&& \fTqF{\alpha+1}\fMqF{\alpha+1}\wLA{\fX}
\ar[d]^{\fMqF{\alpha+1}\var_{\wLA{\fX}}}
\ar[rrr]^{\fTqF{\alpha+1}\fMqF{\alpha+1}\ell\q{\alpha}}
\ar[llu]^{\fTqF{\alpha+1}\fMqF{\alpha+1}\qp{\alpha+1}} &&&
\fTqF{\alpha+1}\fTqF{\alpha+1}\fX \ar[ru]_{\fTqF{\alpha+1}\tet{\fX}}
\ar[d]^{\fMqF{\alpha+1}\var_{\tLA\rho\fX}} & \\
&& \fMqF{\alpha+1}\wLA{\fX} \ar[lld]_{\fMqF{\alpha+1}\qp{\alpha+1}}
\ar[rrr]^{\fMqF{\alpha+1}\ell\q{\alpha}} &&&
\fTqF{\alpha+1}\fX \ar[rd]^{\tet{\fX}}  & \\
\fMqF{\alpha+1}\qY{\alpha+1} \ar[rrrrrr]^{\qf{\alpha+1}} &&&&&& \hso\tfX~.
}
$$
\noindent The top square commutes by applying \w{\fTqF{\alpha+1}} to
the right triangle of \wref[;]{eqinmapo} the right square is
\wref{eqalgebra} for \w[,]{\fTqF{\alpha+1}} which commutes by
Proposition \ref{ptalg}, since \w[.]{\F\sp{\St}\subseteq\qF{\alpha+1}}  The left and middle squares commute by naturality of $\var$; and the bottom square is the right triangle of \wref[.]{eqinmapo} Therefore, the outer square commutes.

If $\alpha$ is a limit ordinal, we let
\w[,]{\qY{\alpha}:=\colim_{\beta<\alpha}\qY{\beta}} with the structure
maps \w{\qi{\beta}:\qY{\beta}\to\qY{\alpha}} for each
\w[.]{\beta<\alpha} Note that by construction
\w[,]{\tLA\rho\fX=\colim_{\alpha<\beta}\tLA\rho\fX}
since \w[,]{\qF{\alpha}=\bigcup_{\alpha<\beta}\,\qF{\beta}}
so we may set \w{\qp{\alpha}:=\colim_{\alpha<\beta}\,\qp{\beta}}
and \w[.]{\qf{\alpha}:=\colim_{\alpha<\beta}\,\qf{\beta}}
\end{mysubsection}

%
%c5   Realizing mapping algebras
%
\sect{Realizing mapping algebras}
\label{crma}

Evidently, if \w{X=\mapa(A,Y)\in\Sa} is a pointed mapping space
out of $A$, we can use \wref{eqcoploop} to define the \Ama
\w{\fX=\fMA Y} with \w[.]{\fX\lin{A}=X} More generally, $\fX$
can be made into an extended \Fma[,] where $\F$ is the closure of \w{\{A\}}
in $\C$ with respect to any set of homotopy colimits constructed from $A$.

We now show that the \Ama[-structure] on $X$, combined with the constructions of Section \ref{cicc} indeed allows us to recover $Y$ (up to $A$-equivalence):

%
%   Theorem:  Realizing a mapping space
%
\begin{thm}\label{treal}
Let $A$ be a cofibrant $\lambda$-small object in a model
category $\C$ as in \S \ref{sass}, and $X$ a pointed simplicial
set. If $X$ can be equipped with an extended \qFma{\lambda} structure
$\fX$ for the family \w{\F=\qF{\lambda}} defined in Section \ref{cicc}, then the map of \Fma[s] \w{\fff:=\qf{\lambda}:\fMqF{\lambda}\qY{\lambda}\to\hfX}
constructed there induces a weak equivalence \w[.]{X\simeq\map_{\C}(A,Y)}
\end{thm}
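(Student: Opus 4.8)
The plan is to show that $\fff=\qf{\lambda}$, evaluated at the object $A$, is a weak equivalence
\[
\qf{\lambda}\lin{A}\colon\map_{\C}(A,Y)=(\fMqF{\lambda}\qY{\lambda})\lin{A}\llra\hfX\lin{A}=X ,
\]
where $Y:=\qY{\lambda}=\colim_{\alpha<\lambda}\qY{\alpha}$; this colimit is a homotopy colimit, since each $\qi{\alpha}$ is a cofibration. First I would pass to the transfinite colimit: because $A$, and hence each $\Sigma^{i}A$, is $\lambda$-small (Definition \ref{dsmall}), \eqref{eqsmall} gives $\map_{\C}(\Sigma^{i}A,\qY{\lambda})=\colim_{\alpha<\lambda}\map_{\C}(\Sigma^{i}A,\qY{\alpha})$, and correspondingly $\qf{\lambda}\lin{\Sigma^{i}A}=\colim_{\alpha<\lambda}\qf{\alpha}\lin{\Sigma^{i}A}$ (compare \S\ref{svih}). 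Using the loop/suspension relations \eqref{eqcoploop} together with $\pi_{n}\map_{\C}(A,Y)=\pi_{0}\map_{\C}(\Sigma^{n}A,Y)=[\Sigma^{n}A,Y]$ and $\pi_{n}X=\pi_{0}\Omega^{n}X=\pi_{0}\fX\lin{\Sigma^{n}A}$, it then suffices to prove that $\qf{\lambda}$ induces a bijection $[\Sigma^{i}A,Y]\to\pi_{0}\fX\lin{\Sigma^{i}A}$ for every $i\geq 0$. Since $\qf{\lambda}$ is a morphism of extended mapping algebras, it is compatible with the full loop and coproduct structure, so these $\pi_{0}$-bijections force an isomorphism on all homotopy groups at every basepoint component, whence $\qf{\lambda}\lin{A}$ is a weak equivalence.

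For surjectivity I would fix $u\in\fX\lin{\Sigma^{i}A}_{0}$ with class $\phi\in\pi_{0}\fX\lin{\Sigma^{i}A}$. By Remark \ref{rcardinal} the cardinals $\kappa_{\alpha}$ are chosen so that, at some successor stage $\alpha+1$, the object $\wLA{\fX}$ attached in the pushout \eqref{eqmapo} is a cofibrant replacement of $\tLA\rho\fX$ (via the trivial fibration $\ell\q{\alpha}$ with section $j\q{\alpha}$), and hence contains, up to homotopy, a cell $c_{u}\cong\Sigma^{i}A$ indexed by $u$. Composing the tautological map $\Sigma^{i}A\to c_{u}$ with $\qp{\alpha+1}\colon\wLA{\fX}\to\qY{\alpha+1}$ and the structure map into $Y$ produces a map realizing $\phi$; by \eqref{eqexpo}, together with the fact (from the proof of Proposition \ref{ptalg}) that the algebra structure map $\tet{\fX}$ returns $u$ on the free generator $c_{u}$, its image under $\qf{\lambda}$ is exactly $u$. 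Thus $[\Sigma^{i}A,Y]\to\pi_{0}\fX\lin{\Sigma^{i}A}$ is onto.

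For injectivity I would take $\phi_{0},\phi_{1}\in[\Sigma^{i}A,Y]$ with $\qf{\lambda}(\phi_{0})=\qf{\lambda}(\phi_{1})$ and, using $\lambda$-smallness, choose representatives $f_{0},f_{1}\colon\Sigma^{i}A\to\qY{\alpha}$ at a common stage $\alpha$. Since $\qf{\lambda}(\phi_{0})=\qf{\lambda}(\phi_{1})$ means $\qf{\alpha}(f_{0})$ and $\qf{\alpha}(f_{1})$ represent the same class in $\pi_{0}\fX\lin{\Sigma^{i}A}$, they lie in the same component of the Kan complex $\fX\lin{\Sigma^{i}A}$ and so are joined by a $1$-simplex $\Phi\in\fX\lin{\Sigma^{i}A}_{1}$. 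In the pushout \eqref{eqmapo} defining $\qY{\alpha+1}$, the counit $\var_{\qY{\alpha}}\colon L_{A}\qY{\alpha}=\tLA\rho\fMA\qY{\alpha}\to\qY{\alpha}$ identifies the cells $c_{f_{0}},c_{f_{1}}$ with $f_{0},f_{1}$ (Remark \ref{rcounit}), while $\wqf{\alpha}$, covering $\tLA\rho\qf{\alpha}$, carries them to the cells $c_{\qf{\alpha}(f_{0})},c_{\qf{\alpha}(f_{1})}$ of $\wLA{\fX}$. The cylinder $\Sigma^{i}A\rtimes\Delta[1]$ indexed by $\Phi$, pushed forward along $\qp{\alpha+1}$, then supplies a homotopy $f_{0}\simeq f_{1}$ in $\qY{\alpha+1}$, so $\phi_{0}=\phi_{1}$ already in $[\Sigma^{i}A,\qY{\alpha+1}]$, and a fortiori in $[\Sigma^{i}A,Y]$.

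The step I expect to be the main obstacle is the transfinite bookkeeping behind convergence. One must check that the cardinals $\kappa_{\alpha}$ (Remark \ref{rcardinal}) and the ordinal $\lambda$ are large enough that every $u\in\fX\lin{\Sigma^{i}A}_{0}$ and every $\Phi\in\fX\lin{\Sigma^{i}A}_{1}$, for all $i\geq 0$ — including those whose representatives only arise from cells attached at earlier stages — is captured before stage $\lambda$, while keeping each $\qF{\alpha}$, and thus the extended theory, small. Two features keep this under control. First, every $\qi{\alpha}$ is a cofibration, so a class or homotopy once realized survives to all later stages and the bijections built up are never destroyed; the surjectivity and injectivity above therefore pass to the colimit. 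Second, \eqref{eqmapo} is a homotopy pushout along the cofibration $\wqf{\alpha}$ with $A$ cofibrant, so $\map_{\C}(A,-)$ sends the whole tower to a homotopy colimit, letting the stagewise statements assemble in the limit. The compatibilities \eqref{eqexpo} and \eqref{eqexp} established in \S\ref{svih}, together with Proposition \ref{ptalg}, are precisely what make attaching cells indexed by the data $\fX$ along the data $L_{A}\qY{\alpha}$ coherent across stages; the remaining care is in handling all basepoint components uniformly, which I would achieve by rerunning the surjectivity and injectivity arguments based at an arbitrary vertex $g$, using that the Stover data of $\fX$ provides cells and cylinders in every component.
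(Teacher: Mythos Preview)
Your proposal is correct and follows essentially the same route as the paper: reduce to showing $\qf{\lambda}$ induces a bijection on $\pi_{0}$ at each $\Sigma^{i}A$, prove surjectivity by tracing the cell $c_{u}$ in $\wLA{\fX}$ through $\qp{\alpha+1}$, and prove injectivity by using the cylinder indexed by the $1$-simplex $\Phi$ to produce a homotopy in $\qY{\alpha+1}$. The only organizational difference is that for surjectivity the paper packages your element-by-element argument into a single section $\qs{\alpha+1}:=\rho\fMA\qp{\alpha+1}\circ\rho\fMA j\q{\alpha}\circ\eta_{\fX}$ of $\hr\qf{\alpha+1}$ built from the unit of the adjunction $\tLA\dashv\rho\fMAsd$, and verifies \eqref{eqsplit} once via a commuting diagram rather than checking generators individually; this also dispenses with your closing worries about cardinalities and basepoints, since the section is constructed uniformly at the level of truncated discrete Stover \Ama[s] and then passed to the colimit via Lemma~\ref{lsmallma}.
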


\begin{proof}
It suffices to show that
\w{\fff_{\#}:\pi_{0}((\fMA\qY{\lambda})\lin{\Sigma^{i}A)}\to
\pi_{0}(\hfX\lin{\Sigma^{i}A})} is bijective for any \w[\vsm.]{i\geq 0}

\noindent \textbf{(a)}~~ We can use the unit \w{\eta_{\fX}:\rho\fX\to\rho\fMA\tLA\rho\fX} of
\wref{equnit} to define:
$$
\qs{\alpha+1}~:=~\rho\fMA\qp{\alpha+1}\circ\rho\fMA\qp{\alpha+1} j\q{\alpha+1}\circ\eta_{\fX}:~\rho\fX~\to~\rho\fMA\qY{\alpha+1}
$$
\noindent for any \w[.]{\alpha<\lambda}

Since \w[,]{\F\sp{\St}\subseteq\qF{\lambda}} $\hfX$ is an \ww{\fTF}-algebra by Proposition \ref{ptalg}. We therefore have a commuting diagram:
\mydiagram[\label{eqsplitoff}]{
\rho\fX=\hr\hfX \ar[r]^{\eta_{\fX}} \ar[rrdd]_{\Id} & \rho\fMAsd\tLA\rho\fX \ar@{=}[r] &
\hr\fMqF{\alpha+1}\tLA\rho\fX \ar[rr]^{\hr\fMqF{\alpha+1}j\q{\alpha+1}} \ar[d]_{\Id} &&
\hr\fMqF{\alpha+1}\wLA{\fX} \ar[lld]_{\hr\fMqF{\alpha+1}\ell\q{\alpha+1}}
\ar[d]^{\hr\fMqF{\alpha+1}p\q{\alpha+1}} \\
&& \hr\fMqF{\alpha+1}\tLA\rho\fX \ar[d]^{\hr h_{\fX}} && \hr\fMqF{\alpha+1}Y\q{\alpha+1} \ar[lld]^{\hr\qf{\alpha+1}} \\
&& \rho\fX=\hr\hfX
}
\noindent in which the left triangle commutes by \wref[,]{eqsplit}
the upper right triangle commutes by \wref[,]{eqretract} and the lower right square commutates by \wref[.]{eqinmapo} This shows that
\begin{myeq}[\label{eqsplitter}]
\hr\qf{\alpha+1}\circ\qs{\alpha+1}=\Id_{\rho\fX}~\vsm.
\end{myeq}

\noindent \textbf{(b)}~~ Thus we have a diagram
\mydiagram[\label{eqladder}]{
\hr\fMA\qY{0}~\ar[r]^{\hr\fMA\qi{0}} \ar[d]_{\rho\qf{0}} &
\hr\fMA\qY{1}~\ar[r]^<<<<<<<{\hr\fMA\qi{1}} \ar[d]_{\hr\qf{1}} &~~\dotsc\hspace*{5mm}
\hr\fMA\qY{\alpha}~\ar[r]^{\hr\fMA\qi{\alpha}} \ar[d]_{\hr\qf{\alpha}} &
\hr\fMA\qY{\alpha+1}~\ar[r]^<<<<<<<{\hr\fMA\qi{\alpha+1}}
\ar[d]_{\hr\qf{\alpha+1}} & \dotsc \\
\hr\hfX\ar@{=}[r] \ar[ru]^{\qs{1}} &
\hr\hfX\ar@{=}[r] \ar[ru]^{\qs{2}} & ~~\dotsc~~
\hr\hfX\ar@{=}[r] \ar[ru]^{\qs{\alpha+1}} &
\hr\hfX\ar@{=}[r] \ar[ru]^{\qs{\alpha+2}} & \dotsc
}
\noindent of truncated extended \Ama[s] in which the squares and lower triangles commute. By Lemma \ref{lsmallma} we then obtain a map
$$
s^{B}~=~\qs{\lambda}\lin{B}:\rho\fX\lin{B}\to\rho\fMA\qY{\lambda}\lin{B}
$$
for each \w{B=\Sigma^{i}A} \wb{0\leq i<\infty} with \w[.]{\rho\fff\lin{B}\circ s^{B}=\Id_{\rho\fX\lin{B}}}
This shows that \w{\fff_{\#}} is surjective\vsm.

\noindent \textbf{(c)}~~Since \w{\rho\fMA\qY{\alpha}\lin{B}} is just the
$1$-truncation of \w[,]{\map(B,\qY{\alpha})} by \wref{eqsmall} we know
$$
\rho\fMA\qY{\lambda}\lin{B}~=~
\colim_{\alpha<\lambda}\, \rho\fMA\qY{\alpha}\lin{B}~,
$$
\noindent and thus
\w[.]{\fff_{\#}=\colim_{\alpha<\lambda}\,\qf{\alpha}_{\#}:=
\colim_{\alpha<\lambda}\,\pi_{0}\rho\qf{\alpha}\lin{B}}
Therefore, if \w{\fff_{\#}(\gamma)=\fff_{\#}(\gamma')} for some
\w[,]{\gamma,\gamma'\in[B,\qY{\lambda}]} then there is an \w{\alpha<\lambda}
and \w[,]{\gamma_{\alpha},\gamma'_{\alpha}\in[B,\qY{\alpha}]} represented by
\w{g,g':B\to\qY{\alpha}} in \w[,]{\rho\fMA\qY{\alpha}\lin{B}_{0}} with
\w[.]{\qf{\alpha}_{\#}(\gamma_{\alpha})=\qf{\alpha}_{\#}(\gamma'_{\alpha})}
In other words, there is a $1$-simplex \w{\sigma\in\fX\lin{B}_{1}}
with \w{d_{0}\sigma=\qf{\alpha}(g)} and \w[.]{d_{1}\sigma=\qf{\alpha}(g')}

From the description in \S \ref{rlalg} we see that
\w[,]{\eta_{\rho\fX}(\sigma)=i_{B_{\sigma}}\in(\fL_{A}\fX)\lin{B}_{1}}
corresponding to the inclusion
\w[,]{B_{\sigma}\rtimes\Delta[1]\hra\tLA\fX} with
\w{d_{0}(i_{B_{\sigma}})=i_{B_{\qf{\alpha}(g)}}=\eta_{\rho\fX}(\qf{\alpha}(g))}
and \w[.]{d_{1}(i_{B_{\sigma}})=
i_{B_{\qf{\alpha}(g')}}=\eta_{\rho\fX}(\qf{\alpha}(g'))}
Similarly \w{\eta_{\rho\fMA\qY{\alpha}}(g)=
i_{B_{g}}\in(\fL_{A}\fMA\qY{\alpha})\lin{B}_{0}}
and
\w[,]{\eta_{\rho\fMA\qY{\alpha}}(g')=i_{B_{g'}}\in
(\fL_{A}\fMA\qY{\alpha})\lin{B}_{0}}
and all these $0$-simplices match up in the following diagram (compare
\wref[):]{eqinmapo}
\mydiagram[\label{eqinmp}]{
i_{B_{g}}\in\fMA L_{A}\qY{\alpha}
\ar[d]^{\fMA\var_{\qY{\alpha}}}
\ar[rrr]^-{\fMA\tLA\rho\qf{\alpha}} &&&
i_{B_{\qf{\alpha}(g)}}\in
\fMA\tLA\rho\fX \ar@/^/[ddr]^{h_{\fX}} \ar[d]_{\fMA\qp{\alpha+1}}\\
g\in\fMA\qY{\alpha}  \ar[rrr]^{\fMA\qi{\alpha}}
\ar@/_/[drrrr]_{\qf{\alpha}} &&&
\qi{\alpha}_{\ast}(g)\in\fMA\qY{\alpha+1}
\ar@{.>}[dr]_<<<<<{\qf{\alpha+1}} & \\
& & & & \hso \qf{\alpha}(g)\in\fX~,
}
\noindent and similarly for \w[.]{g'} If we now set
\w[,]{\tau:=\fMA\qp{\alpha+1}(i_{B_{\sigma}})
\in\fMA\qY{\alpha+1}\lin{B}_{1}}
then evidently \w{d_{0}\tau=\qi{\alpha}_{\ast}(g)} and
\w[,]{d_{1}\tau=\qi{\alpha}_{\ast}(g')} showing that
\w[,]{\qi{\alpha}_{\#}(\gamma_{\alpha})=
\qi{\alpha}_{\#}(\gamma'_{\alpha})}
so \emph{a fortiori} \w[\vsm .]{\gamma=\gamma'\in[B,\qY{\lambda}]}

Thus \w{\fff_{\#}} is bijective, so \w{\fff:\fMA\qY{\lambda}\to\fX} is
a weak equivalence of \Ama[s.] We have thus shown in particular that
the original space $X$ is weakly equivalent to
\w[.]{\map_{\C}(A,\qY{\lambda})}
\end{proof}

Compare \cite[\S 2, Proposition B.1]{DroC}.

%
%c6  A homotopy invariant recognition principle
%
\sect{A homotopy invariant recognition principle}
\label{chiv}

The recognition and recovery procedures described in Section \ref{crma} are 
rigid, inasmuch as they require that the structure on a given space $X$ 
hold on the nose in order to deduce that \w{X\simeq\map_{\C}(A,Y)} for some \w[.]{Y\in\C} We now describe a lax version of the latter, which is homotopy invariant in the sense that if the recognition principle holds for $X$, 
it also holds for any \w[.]{X'\simeq X}

\begin{defn}\label{dleat}
For any ordinal \w[,]{\alpha\leq\lambda} let \w{\CasA{\alpha}} denote the simplicial subcategory of $\C$ with object
set \w{\OO:=\qF{\alpha}} defined in \S \ref{sia}.
A \emph{lax ($\alpha$-extended) theory} (for $A$) is any cofibrant \ww{\SO}-category \w{\hCasA{\alpha}} weakly equivalent to \w{\CasA{\alpha}}
(in the model category on \w{\SOC} defined in \cite[\S 1]{DKanSR}).

A \emph{lax ($\alpha$-extended) \Ama} is a simplicial functor
\w{\hfX:(\hCasA{\alpha})\op\to\Sk} such that on objects \w{\OO=\qF{\alpha}}
$\hfX$ is isomorphic to the discrete extended \Ama \w{\fD{X}} of \S \ref{ddema}, for \w[.]{X:=\hfX\lin{A}} In this case we also say that $\hfX$ constitutes a
\emph{lax ($\alpha$-extended) \Ama structure} on $X$.

The category of lax $\alpha$-extended \Ama[s] is denoted by \w[.]{\MsAa{\alpha}} 
\end{defn}

\begin{mysubsection}{Constructing lax extended $A$-theories}
\label{slat}
Lax $\alpha$-extended theories for $A$ can be constructed explicitly in several ways:

First, consider the category \w[,]{\Fd\CasA{\alpha}} enriched in
bisimplicial sets, obtained by iterating the free category comonad
\w{\cF:=F\circ U} on each simplicial dimension of \w[,]{\CasA{\alpha}}
and let \w{\hCasA{\alpha}} be the cofibrant \ww{\SO}-category obtained from
\w{\Fd\CasA{\alpha}} by taking the diagonal of each bisimplicial set. Note
that \w{\hCasA{\alpha}} is free in each simplicial dimension.

A minimal version \w{\tCasA{\alpha}} of this construction may be obtained as
follows: start with \w{(\tCasA{\alpha})_{0}:=\F\pi_{0}\CasA{\alpha}}
which we can think of as a subcategory of \w{(\hCasA{\alpha})_{0}}
by choosing a section \w{\sigma:\pi_{0}\CasA{\alpha}\to(\CasA{\alpha})_{0}}
(that is, a representative \w{\sigma[\phi]:B\to B'} for each homotopy class \w{\phi\in[B,B']} and \w[).]{B,B'\in\qF{\alpha}}

Next, choose a representative \w{\sigma[\Phi]}
for each homotopy class of homotopies between the chosen maps in
\w{(\tCasA{\alpha})_{0}\subseteq\hCasA{\alpha})_{0}} and composites thereof,
and let \w{(\tCasA{\alpha})_{1}} be the free category generated by these
representatives \w{\sigma[\Phi]} and \w[.]{s_{0}(\tCasA{\alpha})_{0}}
Proceeding similarly for higher homotopies, we obtain a minimal model
\w{\tCasA{\alpha}} for \w[.]{\hCasA{\alpha}}

We can describe \w{\tCasA{\alpha}} more formally by using a section
\w{\sigma:\pi_{0}\CasA{\alpha}\to(\CasA{\alpha})_{0}} to define
\w{(\tCasA{\alpha})_{0}} as above, but then noting that this determines
$$
\map_{\tCasA{\alpha}}(\Sigma B, B')_{0}~=~\Omega\map_{\tCasA{\alpha}}(B, B')_{0}~
\subseteq~\map_{\tCasA{\alpha}}(B, B')_{1}
$$
\noindent for any \w[,]{B,B'\in\qF{\alpha}} which can then be used to
determine \w[,]{\map_{\tCasA{\alpha}}(B, B')_{1}} and similarly for the higher
simplices.

A third possibility is to replace \w{\hCasA{\alpha}} by the free simplicial
resolution \w{\Fd(\CasA{\alpha})_{0}} of the $0$-simplices of \w{\CasA{\alpha}}
\wwh that is, of the ordinary (non-simplicial) category underlying \w[.]{\CasA{\alpha}} This version certainly acts simplicially on any realizable mapping algebra, but depends on less data than the other two.
\end{mysubsection}

\begin{remark}\label{rglema}
By definition, we have a weak equivalence of \ww{\SO}-categories \w[,]{\phi:\hCasA{\alpha}\to\CasA{\alpha}} allowing us to pull back any strict extended \Ama \w{\fX:(\CasA{\alpha})\op\to\Sk} to a lax one. If we use the first construction of \S \ref{slat} for \w[,]{\hCasA{\alpha}} the map
$\phi$ is induced by the usual augmentation \w[,]{\var:\Fd\CesA\to\CesA} which sends the free homotopies in \w{\Fd(\CasA{\alpha})_{0}} to identity homotopies.

In particular, for any \w{Y\in\C} we have the \emph{realizable}
lax \Ama \w[,]{\fMAa{\alpha} Y} as well as its
discrete version \w[.]{(\fMAa{\alpha} Y)_{\delta}}

A lax \Ama of the form \w{\fMAa{\alpha} B} for \w{B\in\qF{\alpha}} is called
\emph{free}  (cf.\ \S \ref{dfreema}), and we have as in Lemma \ref{lfreema}:
\end{remark}

\begin{lemma}\label{llfreema}
If $\hfY$ is lax $\alpha$-extended \Ama and \w[,]{B\in\qF{\alpha}}
there is a natural isomorphism
\w[.]{\map_{\MsAa{\alpha}}(\fMAa{\alpha} B,\,\hfY)\cong\hfY\lin{B}}
\end{lemma}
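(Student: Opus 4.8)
The plan is to imitate the proof of Lemma~\ref{lfreema}, replacing the strict enriched Yoneda lemma for $\TsF$ by its simplicially enriched counterpart for the cofibrant $\SO$-category $\hCasA{\alpha}$. First I would define the evaluation map $\Phi:\map_{\MsAa{\alpha}}(\fMAa{\alpha} B,\hfY)\to\hfY\lin{B}$ exactly as before, by $\Phi(\fff):=\fff(\Id_{B})\in\hfY\lin{B}_{0}$, using the tautological $0$-simplex $\Id_{B}\in(\fMAa{\alpha} B)\lin{B}_{0}=\map_{\C}(B,B)_{0}$ (available because $B\in\qF{\alpha}=\Obj\hCasA{\alpha}$). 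Naturality of $\Phi$ in $B$ and in $\hfY$ is formal, and one records at the outset that $\MsAa{\alpha}$ is a full sub-$\SO$-category of $[(\hCasA{\alpha})\op,\Sk]$, so that the mapping space of lax \Ama[s] coincides with the ambient simplicial functor mapping space and the Yoneda isomorphism will descend to it.

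The essential content is then the claim that the free lax \Ama $\fMAa{\alpha} B$ corepresents the evaluation functor $\hfY\mapsto\hfY\lin{B}$ on $\MsAa{\alpha}$. The subtlety is that $\fMAa{\alpha} B$ is defined in Remark~\ref{rglema} as the pullback $\phi^{\ast}\fMqF{\alpha} B$ along the weak equivalence $\phi:\hCasA{\alpha}\to\CasA{\alpha}$, so its values are the \emph{strict} spaces $\map_{\C}(-,B)$ rather than the representable spaces $\map_{\hCasA{\alpha}}(-,B)$; the two are connected by $\phi$ but are not equal. To build the inverse of $\Phi$ directly, given $y\in\hfY\lin{B}_{n}$ I would set $\eta\lin{C}(f):=\hfY(\tilde f)(y)$, where $\tilde f\in\map_{\hCasA{\alpha}}(C,B)$ is a lift of $f\in(\fMAa{\alpha} B)\lin{C}=\map_{\C}(C,B)$ along $\phi$ and $\hfY(\tilde f):\hfY\lin{B}\to\hfY\lin{C}$. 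Such lifts exist because, for the comonad construction of \S\ref{slat}, the augmentation $\var:\Fd\CasA{\alpha}\to\CasA{\alpha}$ inducing $\phi$ is surjective on the simplices of every mapping space. Since $f=\phi(\tilde f)^{\ast}(\Id_{B})$, naturality of any transformation forces $\eta$ to be the unique one with $\Phi(\eta)=y$, which simultaneously yields injectivity of $\Phi$.

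What remains, and is the main obstacle, is well-definedness of this inverse: two lifts $\tilde f,\tilde f'$ of the same $f$ act on $y$ only up to the lax (higher-homotopy) structure of $\hfY$, so a priori $\hfY(\tilde f)(y)$ and $\hfY(\tilde f')(y)$ agree only up to a higher simplex. The point to verify is that the constraint of Definition~\ref{dleat}---that $\hfY$ restricts on $\Obj\hCasA{\alpha}=\qF{\alpha}$ to the discrete extended \Ama $\fD{X}$ for $X=\hfY\lin{A}$---together with simplicial naturality rigidifies these choices, absorbing the ambiguity into the higher simplices of $\map_{\MsAa{\alpha}}(\fMAa{\alpha} B,\hfY)$ so that $\Phi$ is an isomorphism of simplicial sets and not merely a weak equivalence. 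Conceptually this is exactly the assertion that $\fMAa{\alpha} B$ plays the role of the representable $\SO$-functor $\map_{\hCasA{\alpha}}(-,B)$; once this identification is in place the isomorphism follows from the strong enriched Yoneda lemma (\cite[2.4]{GKellyEC}), precisely as in Lemma~\ref{lfreema}.
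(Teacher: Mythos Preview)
The paper gives no proof beyond the phrase ``as in Lemma~\ref{lfreema}'', so in spirit your approach (enriched Yoneda) matches what the authors intend. You go further than the paper in correctly isolating a subtlety they gloss over: by Remark~\ref{rglema}, \w{\fMAa{\alpha}B} is the pullback \w{\phi^{\ast}\fMqF{\alpha}B} along the weak equivalence \w[,]{\phi:\hCasA{\alpha}\to\CasA{\alpha}} so its values are the strict mapping spaces \w{\map_{\C}(-,B)} with \w{\hCasA{\alpha}} acting through $\phi$. This is \emph{not} the representable functor \w{\map_{\hCasA{\alpha}}(-,B)} to which the enriched Yoneda lemma literally applies.

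Your proposed resolution of the well-definedness obstacle does not close the gap, however. The condition in Definition~\ref{dleat} constrains only the \emph{values} \w{\hfY\lin{C}} (they must coincide with the prescribed limits of \w[),]{X=\hfY\lin{A}} not the action of $\hfY$ on morphisms of \w[.]{\hCasA{\alpha}} Thus two lifts \w{\tilde f,\tilde f'} of the same \w{f\in\map_{\C}(C,B)} can genuinely act differently: indeed, if \w{\hfY(\tilde f)=\hfY(\tilde f')} held for every such pair and every $C$, then $\hfY$ would factor through $\phi$ and hence be a \emph{strict} extended \Ama[,] collapsing the lax notion. So your inverse \w{y\mapsto\eta} is not well-defined in general; your map $\Phi$ is injective (your argument for that is fine, since $\phi$ is levelwise surjective for the comonad resolution), but surjectivity fails. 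What the enriched Yoneda lemma actually yields is \w[,]{\map(\map_{\hCasA{\alpha}}(-,B),\hfY)\cong\hfY\lin{B}} and since \w{\fMAa{\alpha}B} is only weakly equivalent to this representable, one should expect a natural \emph{weak equivalence} rather than an isomorphism. That is enough for the uses in Lemma~\ref{llpushout} and \S\ref{slscc}, so the practical content survives, but the obstacle you honestly flagged as ``the main obstacle'' is real and the rigidification you sketch does not remove it.
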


\begin{lemma}\label{llpushout}
Let $B$, $C$, and $D$ be objects in \w{\qF{\alpha}} for some \w[,]{\alpha<\lambda} \w{i:B\hra C} a cofibration and \w{f:B\to D} any map in $\C$, with $E$ the pushout
of \w{D\leftarrow B\hra C} (which thus lies in \w[).]{\qF{\alpha+1}}
Then
\mydiagram[\label{eqpolma}]{
\fMAa{\alpha+1} B \ar[d]^{f_{\ast}} \ar[rr]^{i_{\ast}} && \fMAa{\alpha+1} C \ar[d]\\
\fMAa{\alpha+1} D \ar[rr] && \fMAa{\alpha+1} E
}
\noindent is a homotopy pushout in \w[.]{\MsAa{\alpha+1}}
\end{lemma}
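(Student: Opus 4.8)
The plan is to verify the co-universal property of a homotopy pushout directly, by mapping the square \eqref{eqpolma} \emph{out} into an arbitrary fibrant lax mapping algebra and using the representability furnished by Lemma \ref{llfreema}. Recall that in a simplicial model category a commuting square is a homotopy pushout exactly when, for every fibrant object $\hfY$, applying $\map_{\MsAa{\alpha+1}}(-,\hfY)$ produces a homotopy pullback of simplicial sets, with the image of the pushout corner sitting in the pullback position. So it suffices to show that for each fibrant $\hfY\in\MsAa{\alpha+1}$ the resulting square of mapping spaces is a homotopy pullback.

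First I would record that all four corners lie in $\qF{\alpha+1}$: the hypotheses give $B,C,D\in\qF{\alpha}\subseteq\qF{\alpha+1}$, and by the statement the pushout $E$ also lies in $\qF{\alpha+1}$ (cf.\ the closure conditions of \S\ref{sia}). Hence each of $\fMAa{\alpha+1} B$, $\fMAa{\alpha+1} C$, $\fMAa{\alpha+1} D$, $\fMAa{\alpha+1} E$ is a \emph{free} lax mapping algebra. Since free lax mapping algebras are cofibrant, the strict isomorphism of Lemma \ref{llfreema} already computes the correct derived mapping spaces, and applying $\map_{\MsAa{\alpha+1}}(-,\hfY)$ to \eqref{eqpolma} yields, naturally in $\hfY$, the square of Kan complexes
\[
\hfY\lin{E}\to\hfY\lin{C},\quad \hfY\lin{E}\to\hfY\lin{D},\quad \hfY\lin{C}\xra{i^{\ast}}\hfY\lin{B},\quad \hfY\lin{D}\xra{f^{\ast}}\hfY\lin{B},
\]
with $\hfY\lin{E}$ in the pullback position and the two legs induced by $i$ and $f$.

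It then remains to see that this square is a homotopy pullback, i.e.\ that $\hfY\lin{E}$ is weakly equivalent to $\hfY\lin{C}\times\sp{h}_{\hfY\lin{B}}\hfY\lin{D}$. This is precisely the defining extended mapping-algebra condition: $E=C\cup_{B}D$ is a designated colimit of the theory $\CasA{\alpha+1}$, and — because $i$ is a cofibration between cofibrant objects and $\C$ is left proper, so that the pushout is a genuine homotopy pushout (cf.\ Remark \ref{reso}) — a lax extended mapping algebra carries it to a homotopy pullback. For the realizable algebras $\fMAa{\alpha+1} Y$ this is just axiom SM7: $i^{\ast}:\map_{\C}(C,Y)\to\map_{\C}(B,Y)$ is a fibration and $\map_{\C}(E,Y)$ is the pullback along it. For a general lax $\hfY$ one uses that, on objects, $\hfY$ is isomorphic to the discrete extended algebra $\fD{X}$ with $X=\hfY\lin{A}$ (Definition \ref{dleat}), whose defining limits (Definition \ref{ddema}) turn the designated homotopy colimits into homotopy limits.

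The main obstacle is exactly this last point: ensuring that an arbitrary, not-necessarily-realizable lax mapping algebra sends the pushout to a \emph{homotopy} pullback and not merely to the strict one. The crux is that the leg $i^{\ast}:\hfY\lin{C}\to\hfY\lin{B}$ induced by the cofibration $i$ is a fibration, so that the strict pullback computing $\hfY\lin{E}=\fD{X}\lin{E}$ already models the homotopy pullback; this rests on the fibrancy of all objects of $\C$, the cofibration hypothesis on $i$, and the homotopy-colimit reading of the designated diagrams secured in Remark \ref{reso}. Granting this, naturality in $\hfY$ finishes the co-universal characterization, and \eqref{eqpolma} is a homotopy pushout in $\MsAa{\alpha+1}$.
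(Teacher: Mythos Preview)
Your route differs from the paper's. The paper does \emph{not} argue directly in $\MsAa{\alpha+1}$: it first shows that the corresponding square of \emph{strict} free extended mapping algebras $\fMqF{\alpha+1}B,\fMqF{\alpha+1}C,\fMqF{\alpha+1}D,\fMqF{\alpha+1}E$ is a homotopy pushout in $\MsqF{\alpha+1}$ (via Lemma~\ref{lfreema} and the fact that $E$ is a genuine homotopy pushout in $\C$, citing Mather), and only then transfers the conclusion to the lax category using the weak equivalence $\hCasA{\alpha+1}\to\CasA{\alpha+1}$ of $\SaO$-categories together with Lemma~\ref{ldema}. You instead stay in $\MsAa{\alpha+1}$ throughout and appeal to Lemma~\ref{llfreema} to reduce to a statement about the values $\hfY\lin{-}$ of an arbitrary lax algebra.

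Your argument has a gap at precisely the point you flag as ``the main obstacle''. The assertion that $i^{\ast}:\hfY\lin{C}\to\hfY\lin{B}$ is a fibration is justified by SM7 only when $\hfY=\fMAa{\alpha+1}Y$ is \emph{realizable}, so that $i^{\ast}=\map_{\C}(i,Y)$ with $Y$ fibrant in $\C$. For a general lax $\hfY$, Definition~\ref{dleat} only says that on objects $\hfY$ agrees with $\fD{X}$ for $X=\hfY\lin{A}$; the values $\fD{X}\lin{B},\fD{X}\lin{C}$ are iterated \emph{strict} limits in $\Sa$ of the single Kan complex $X$ (Definition~\ref{ddema}), and there is no object of $\C$ in sight to which ``fibrancy of all objects of $\C$'' or SM7 could apply. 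Your appeal to Remark~\ref{reso} concerns the colimit side in $\C$, not the limit side in $\Sa$. To close this you would need an independent induction over the colimit-generation of objects in $\qF{\alpha}$ showing that $\fD{X}$ sends the designated cofibrations to fibrations of simplicial sets; the paper's detour through $\MsqF{\alpha+1}$ and the transfer along Lemma~\ref{ldema} is exactly what lets it avoid analyzing arbitrary (non-realizable) algebras at this step.
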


\begin{proof}
Note that since lax \Ama[s] are modeled on \w[,]{\hCasA{\alpha}} rather than \w[,]{\CasA{\alpha}\subseteq\C} the free lax \Ama functor \w{\fMAa{\alpha+1}} does not preserve colimits taken in \w[.]{\CasA{\alpha}} However, since $E$ is a \emph{homotopy} pushout of \w[,]{D\leftarrow B\hra C} by Lemma \ref{lfreema}
\mydiagram[\label{eqpoma}]{
\fMqF{\alpha+1} B \ar[d]^{f_{\ast}} \ar[rr]^{i_{\ast}} && \fMqF{\alpha+1} C \ar[d]\\
\fMqF{\alpha+1} D \ar[rr] && \fMqF{\alpha+1} E
}
\noindent is a homotopy pushout in \w{\MsqF{\alpha+1}} (cf.\ \cite{MathP}).
Because we have a weak equivalence \w{\hCasA{\alpha+1}\to\CasA{\alpha+1}}
(in \w[),]{\SOC} using Lemma \ref{ldema} we see that \wref{llfreema} is a homotopy pushout, too.
\end{proof}

\begin{defn}\label{dllalg}
As in \S \ref{dlalg}, we define an
\ww{\fTAe}-\emph{algebra} to be a discrete $\lambda$-extended \Ama $\fX$
equipped with a natural map \w{\hee{\fX}:\fTAe\fX\to\fX}
such that the diagram corresponding to \wref{eqalgebra} commutes
\emph{up to homotopy} \wh that is:
\begin{myeq}[\label{eqhalgebra}]
\hee{\fX}\circ(\fMAe)_{\delta}\var_{\tLA\rho\fX}~\sim~
\hee{\fX}\circ(\fTAe)_{\delta}(\hee{\fX})~.
\end{myeq}
\end{defn}

\begin{remark}\label{rllalg}
Note that \wref{eqhalgebra} holds automatically for any lax $\lambda$-extended
\Ama $\hfX$, since \w{\hee{\fX}} encodes the (lax) pre-compositions with
maps from \w[,]{\CesA} as in Proposition \ref{ptalg}. Thus lax $\lambda$-extended \Ama[s] are in particular \ww{\fTAe}-algebras. However, 
the analogue of Remark \ref{rptalg} does not hold here, since \wref{eqhalgebra} does not guarantee higher homotopy commutativity.
\end{remark}

%
%   Proposition: transfer of lax mapping algebras under htpy equiv
%
\begin{prop}\label{plema}
Given two weakly equivalent fibrant simplicial sets $X$ and $Y$,
any lax \Ama structure $\hfX$ on $X$ determines a lax \Ama structure
$\hfY$ on $Y$, unique up to weak equivalence.
\end{prop}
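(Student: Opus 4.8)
The plan is to transport the lax structure from $X$ to $Y$ along an honest weak equivalence, using the cofibrancy of the theory \w{\hCasA{\alpha}} to guarantee that the transport exists and is unique up to weak equivalence. First I would reduce to a direct map: since every object of \w{\Sa} is cofibrant and $X,Y$ are fibrant by hypothesis, they are fibrant-cofibrant, so the weak equivalence \w{X\simeq Y} is realized by an actual simplicial homotopy equivalence \w[,]{f:X\xra{\sim}Y} which I fix together with a homotopy inverse \w[.]{g:Y\to X}

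Next I would transport the structure on objects. By Definition \ref{ddema} each value \w{\fD{X}\lin{B}} is an iterated limit of copies of \w[,]{X=\fD{X}\lin{A}} taken over the diagrams dual to those defining the colimits in \w[;]{\CasA{\alpha}} by Remark \ref{reso} these are in fact \emph{homotopy} limits, and all the simplicial sets involved are fibrant. Hence applying \w{\fD{-}} to $f$ yields an objectwise weak equivalence \w{\fD{f}:\fD{X}\to\fD{Y}} of $\OO$-indexed diagrams (\w[),]{\OO=\qF{\alpha}} because homotopy limits preserve objectwise weak equivalences between fibrant diagrams. Thus the prescribed object-values \w{\fD{Y}} of the sought \w{\hfY} are objectwise equivalent to those of \w[.]{\hfX}

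The heart of the argument is upgrading this to the level of simplicial functors, and here the cofibrancy of \w{\hCasA{\alpha}} is essential. I would regard lax \Ama[s] as objects of the category of simplicial functors \w{(\hCasA{\alpha})\op\to\Sa} landing in \w[,]{\Sk} equipped with the projective model structure, in which weak equivalences and fibrations are detected objectwise and \w{\hfX} is fibrant. Since \w{\hCasA{\alpha}} is free in each simplicial degree (by the constructions of \S \ref{slat}), a simplicial functor is determined by its values on the free generating morphisms in each degree, compatibly with the face and degeneracy operators. I would therefore define \w{\hfY} to have object-values \w{\fD{Y}} and, on each generator \w[,]{\mu} the map obtained by conjugating \w{\hfX(\mu)} with \w{\fD{f}} and \w[,]{\fD{g}} filling the resulting coherence data (functoriality together with the simplicial identities) by induction on the simplicial degree. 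At each stage the obstruction to a filler lies in a mapping space built from the \w[;]{\map_{\Sa}(\fD{Y}\lin{C},\fD{Y}\lin{B})} since \w{\fD{f}} is an objectwise weak equivalence between fibrant objects, the pertinent spaces of choices are nonempty and contractible, yielding \w{\hfY} together with a weak equivalence \w{\hfX\to\hfY} covering \w[.]{\fD{f}}

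Uniqueness up to weak equivalence would follow from the same contractibility: given two such structures \w{\hfY,\hfY'} on $Y$, each is weakly equivalent to \w{\hfX} over \w[,]{\fD{f}} hence to the other by the two-out-of-three property, and the degreewise filling argument shows that the space of comparison weak equivalences over the identity of $Y$ is nonempty. I expect the main obstacle to be exactly this inductive coherence step --- verifying that the obstruction spaces governing the transport of the homotopy-coherent action are contractible. This is precisely where the freeness of \w{\hCasA{\alpha}} is indispensable: without it one could neither reduce to defining the structure on generators nor assemble the conjugates of \w{\hfX} into a genuine simplicial functor.
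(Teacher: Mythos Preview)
Your outline has the right instincts---use cofibrancy of \w{\hCesA} and transport along a weak equivalence---but the central step, the ``inductive coherence'' construction of \w[,]{\hfY} is asserted rather than proved. Conjugating \w{\hfX(\mu)} by \w{\fD{f}} and \w{\fD{g}} does not give a simplicial functor: composites are only respected up to the homotopy \w[,]{\fD{g}\circ\fD{f}\sim\Id} and you then need a full tower of higher coherences. Your claim that the relevant obstruction spaces are ``nonempty and contractible'' is exactly what must be demonstrated, and nothing in the freeness of \w{\hCesA} by itself forces this; freeness lets you \emph{specify} values on generators, but it does not make arbitrary homotopy-commutative assignments into strict functors. (A minor side issue: Remark \ref{reso} concerns colimits in $\C$, not the dual limits defining \w[,]{\fD{X}} so your justification that \w{\fD{f}} is an objectwise weak equivalence also needs a separate argument about homotopy limits of fibrant diagrams.)

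The paper sidesteps the entire coherence problem by changing its shape. Using Lemma \ref{ldema}, a lax \Ama structure on $X$ is the same datum as a map of \ww{\SaO}-categories \w[,]{f:\hCesA\to\CX} so the question becomes one of transporting a single map in the model category \w[.]{\SaOC} The paper then factors the homotopy equivalence \w{Y\to X} through an intermediate $Z$ so as to obtain an honest \emph{trivial fibration} \w{\Phi:\CZ\to\CX} (not merely a weak equivalence), lifts $f$ against $\Phi$ using cofibrancy of \w[,]{\hCesA} and finally pushes forward along a weak equivalence \w[.]{\Xi:\CZ\to\CY} This reduces everything to one standard model-categorical lifting, with uniqueness immediate from the fact that $\Phi$ and $\Xi$ are weak equivalences. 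Compared with your plan, the key moves you are missing are (i) the reformulation via Lemma \ref{ldema}, which replaces ``simplicial functor into \w{\Sk}'' by ``map of \ww{\SaO}-categories'', and (ii) the introduction of the intermediate $Z$ to turn a mere weak equivalence into a trivial fibration, so that a single lift replaces your inductive obstruction argument.
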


\begin{proof}
Let \w{\CX} denote the \ww{\SO}-category of \S \ref{ddema},
where $\OO$ is the object set of \w{\CesA} (since $X$ is fibrant, we
may assume that \w{\CX} is fibrant in \w[).]{\SOC} By Lemma \ref{ldema},
the lax \Ama structure $\hfX$ on $X$ is determined by a map of \ww{\SO}-categories \w[.]{f:\hCesA\to\CX}

Since $X$ and $Y$ are weak equivalent Kan complexes, there is a homotopy equivalence \w{h:Y\to X} by \cite[I, \S 1, Theorem 1]{QuiH}. Factoring $h$ as a cofibration followed by a fibration and using standard lifting properties, we obtain a diagram of weak equivalences:
\mydiagram[\label{eqtriangle}]{
&& Z \ar@{->>}[rrd]_{p} \ar@/_{1.9pc}/[lld]_{q}&&\\
Y \ar@{^{(}->}[rru]_{i} \ar[rrrr]^{h} &&&&
X \ar@/_{1.9pc}/[llu]_{j}
}
\noindent with $p$ a fibration, $i$ a cofibration, \w[,]{p\circ i=h}
\w[,]{q\circ i=\Id_{X}} and \w{p\circ j=\Id_{Y}} (so $j$ is a cofibration, too).

If we use the usual functorial constructions of products, (homotopy) pullbacks and
sequential limits in \w[,]{\Sa} we may assume that they preserve objectwise fibrations and cofibrations, as well as weak equivalences. Thus if \w{L_{I}(X)} is one of the objects of \w[,]{\CX} obtained from $X$ by iterated homotopy limits as above, we have a trivial cofibration \w[,]{L_{I}(j):L_{I}(X)\to L_{I}(Z)}
inducing a commuting diagram of weak equivalences on mapping spaces
\mydiagram[\label{eqsections}]{
\mapa(L_{I}(X),\,X) \ar[rr]^{L_{I}(p)^{\ast}}_{\simeq} \ar[d]_{\Id} &&
\mapa(L_{I}(Z),\,X) \ar[rr]^{j_{\ast}}_{\simeq} &&
\mapa(L_{I}(Z),\,Z) \ar[lld]^{L_{I}(j)^{\ast}}_{\simeq} \\
\mapa(L_{I}(X),\,X) && \mapa(L_{I}(X),\,Z) \ar[ll]^{p{\ast}}_{\simeq}
}
\noindent with \w{L(i)^{\ast}} and \w{p_{\ast}} fibrations
(by \cite[II, \S 2, SM7]{QuiH}). By \wref[,]{eqdema} the trivial fibrations
\w{L(i)^{\ast}\circ p_{\ast}:\mapa(L_{I}(Z),Z)\to\mapa(L_{I}(X),X)} fit together to define a trivial fibration \w{\Phi:\CZ\to\CX} in \w{\SOC} (with section
\w[).]{\Psi:\CX\to\CZ}

Since \w{\hCesA} is cofibrant, we have a lifting in \w[:]{\SOC}
\mydiagram[\label{eqliftalg}]{
\ast \ar[d] \ar[rr] && \CZ \ar@{->>}[d]^{\Phi}_{\simeq} \\
\hCesA \ar[rr]_{f} \ar@{.>}[rru]^{\hat{f}} && \CX
}
\noindent and thus a lax \Ama structure on $Z$ (corresponding to
$\hat{f}$ by Lemma \ref{ldema}).

Similarly, the left hand side of \wref{eqtriangle} yields weak equivalences
\w[,]{L(i)_{\ast}\circ q^{\ast}:\mapa(L_{I}(Z),Z)\to\mapa(L_{I}(Y),Y)} which fit together to define a weak equivalence of \ww{\SO}-categories
\w[.]{\Xi:\CZ\to\CY} Composing this with $\hat{f}$ yields a map
\w[,]{\Xi\circ\hat{f}:\hCesA\to\CY} and thus a lax \Ama structure $\hfY$ on $Y$.
Since $\Xi$ and $\Phi$ are weak equivalences, $\hfY$ is unique up to weak equivalence.
\end{proof}

\begin{mysubsection}{The relaxed colimit construction}
\label{slscc}
We now show how the construction of Section \ref{crma} can be applied
to a lax \Ama structure $\hfX$ on mapping space \w[:]{X=\map(A,Y)}

In the notation of \S \ref{sia}, for any ordinal
\w[,]{\alpha<\lambda} at the $\alpha$-th stage of the
induction we assume that we have constructed the approximations
\w[,]{\qY{\beta}\in\TesA} \wb[,]{\beta\leq\alpha} starting with
\w{\qY{0}:=\ast\in\TesA} as the $0$-th approximation to $Y$.

At stage $\alpha$ of the induction we assume that we have a map of lax
\Ama[s] \w{\qf{\beta}:\fMAe\qY{\beta}\to\hfX} and
a map \w{\tqp{\beta}:\tLA\rho\hfX\to\tqY{\beta}} in $\C$
for each \w[,]{\beta<\alpha} as well as a cofibration
\w{\qi{\beta}:\qY{\beta}\to\qY{\beta+1}} in $\C$ such that
\begin{myeq}\label{eqhexp}
\qf{\beta+1}\circ\fMAe\qi{\beta}~\sim~\qf{\beta}\hs\text{and}\hs
\qf{\beta+1}\circ\fMAe\qp{\beta+1}~\sim~\hee{\hfX}
\end{myeq}
\noindent for each \w{\beta<\alpha} (homotopic, not strictly equal).
By Remark \ref{rllalg}, we have an \ww{\fTAe}-algebra structure
\w{\hee{\hfX}} on \w[.]{\hfX}

The construction of \w{\qY{\alpha+1}} is identical to that of \S \ref{sic}, with
\w{\fMqF{\alpha}} and \w{\fTqF{\alpha}} replaced throughout by \w{\fMAe} and \w[,]{\fTAe} respectively.

Again \w{\qY{\alpha+1}} is defined by the analogue of \wref[:]{eqmapo}
\mydiagram[\label{eqlmapo}]{
\ar @{} [drrr] |>>>>>>>{\framebox{\scriptsize{PO}}}
L_{A}\qY{\alpha}~\ar[d]^{\var_{\qY{\alpha}}}~~ \ar@{=}[r] &
~\tLA\rho\fMAe\qY{\alpha} \ar@{^{(}->}[rr]^-{\wqf{\alpha}} &&
\wLA{\fX} \ar[d]^{\qp{\alpha+1}}\\
\qY{\alpha}~~  \ar@{^{(}->}[rrr]_{\qi{\alpha}} &&& ~~\tqY{\alpha+1}
}
\noindent in $\C$, which is a (homotopy) pushout of objects in \w[,]{\qF{\alpha}} so it is in \w[,]{\qF{\alpha+1}} and thus in \w[,]{\TesA} too.
Applying \w{\fMAe} to \wref{eqlmapo} thus yields a homotopy pushout
diagram in \w[,]{\MsAe} by Lemma \ref{llpushout}.

Moreover, the inner square in the following diagram commutes up to homotopy in \w[:]{\MsAe}
\mydiagram[\label{eqhinmapo}]{
\fMAe L_{A}\qY{\alpha}\ar[d]^{\fMAe\tvar_{\qY{\alpha}}}
\ar[rrr]^-{\fMAe\tLA\rho\qf{\alpha}} &&&
\fMAe\tLA\rho\hfX \ar@/^/[ddr]^{\hee{\hfX}}
\ar[d]_{\fMAe\qp{\alpha+1}}\\
\fMAe\tqY{\alpha}  \ar[rrr]^{\fMAe\qi{\alpha}}
\ar@/_/[drrrr]_{\qf{\alpha}} &&&
\fMAe\qY{\alpha+1} \ar@{.>}[dr]_<<<<<{\qf{\alpha+1}} & \\
& & & & \hso \hfX~.
}
\noindent by the induction hypothesis.

Since the upper left corner of \wref{eqhinmapo} consists of free lax
\Ama[s,] by Lemma \ref{llfreema} we have a dotted map (of lax \Ama[s]) making
the full diagram \wref{eqhinmapo} commute up to homotopy. This completes the induction step by defining \w[.]{\qf{\alpha+1}}

If $\alpha$ is a limit ordinal, we let
\w[,]{\qY{\alpha}:=\colim_{\beta<\alpha}\qY{\beta}} as before.
\end{mysubsection}

The proof of Theorem \ref{treal} readily adapts to yield:

%
%   Theorem:  Realizing a mapping space up to homotopy
%
\begin{thm}\label{threal}
Let $A$ be any $\lambda$-small object in a pointed simplicial model
category $\C$, and $X$ a pointed simplicial set equipped with a lax
$\lambda$-extended \Ama structure $\hfX$.  If we construct
\w[,]{Y:=\tqY{\lambda}} as in \S \ref{slscc}, then the map
of lax \Ama[s] \w{\qf{\lambda}:\fMAe Y\to\hfX} defined there induces a
weak equivalence \w[.]{X\simeq\map_{\C}(A,Y)}
\end{thm}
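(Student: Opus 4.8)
The plan is to adapt the proof of Theorem \ref{treal} essentially verbatim, replacing the strict functors \w{\fMqF{\alpha}} and \w{\fTqF{\alpha}} by the lax ones \w{\fMAe} and \w[,]{\fTAe} while tracking the points at which strict commutativity is weakened to commutativity up to homotopy. Writing \w[,]{\fff:=\qf{\lambda}} it again suffices to show that \w{\fff_{\#}} is a bijection on \w{\pi_{0}(\hfX\lin{\Sigma^{i}A})} for every \w[:]{i\geq 0} iterating the loop identity \wref{eqcoploop} (valid on objects for lax \Ama[s] by Definition \ref{dleat}) gives \w{\pi_{0}(\hfX\lin{\Sigma^{i}A})\cong\pi_{i}(X)} and likewise for \w[,]{\fMAe Y} so this is precisely the assertion that \w{\qf{\lambda}\lin{A}:\map_{\C}(A,Y)\to X} is a weak equivalence.

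For surjectivity I would define the maps \w{\qs{\alpha+1}} exactly as in parts (a)--(b) of that proof, using the unit \w{\eta_{\fX}} of \wref{equnit} together with the \ww{\fTAe}-algebra structure map \w{\hee{\hfX}} supplied by Remark \ref{rllalg}. The only change is that, because \wref{eqhalgebra} holds only up to homotopy, the splitting relation \wref{eqsplitter} now reads \w{\hr\qf{\alpha+1}\circ\qs{\alpha+1}\sim\Id} rather than a strict identity. This is harmless: assembling the \w{\qs{\alpha+1}} into the ladder \wref{eqladder} and invoking the smallness Lemma \ref{lsmallma} (whose hypothesis is a condition on \w[,]{\fX\lin{A}} unaffected by laxness) yields a homotopy-section \w{s^{B}} with \w[,]{\rho\fff\lin{B}\circ s^{B}\sim\Id} and a homotopy-section is all that is needed to force \w{\fff_{\#}} to be surjective on \w[.]{\pi_{0}}

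For injectivity I would reproduce part (c). Given \w{\gamma,\gamma'\in[B,\qY{\lambda}]} with \w[,]{\fff_{\#}(\gamma)=\fff_{\#}(\gamma')} the \ww{\lambda}-smallness of \w{A} together with \wref{eqsmall} expresses \w{\pi_{0}\fMAe\qY{\lambda}\lin{B}} as the colimit \w[,]{\colim_{\alpha<\lambda}\pi_{0}\fMAe\qY{\alpha}\lin{B}} so there is a finite stage \w[,]{\alpha} lifts \w[,]{g,g'} and a \w{1}-simplex \w{\sigma\in\fX\lin{B}_{1}} with faces \w{\qf{\alpha}(g)} and \w[.]{\qf{\alpha}(g')} Pushing \w{\sigma} through \w{\eta_{\fX}} and then through \w{\fMAe\qp{\alpha+1}} produces a \w{1}-simplex \w{\tau:=\fMAe\qp{\alpha+1}(i_{B_{\sigma}})\in\fMAe\qY{\alpha+1}\lin{B}_{1}} whose faces are \w{\qi{\alpha}_{\ast}(g)} and \w[,]{\qi{\alpha}_{\ast}(g')} exactly as in \wref[.]{eqinmp} Since the desired conclusion \w{\qi{\alpha}_{\#}(\gamma_{\alpha})=\qi{\alpha}_{\#}(\gamma'_{\alpha})} is an equality of \w{\pi_{0}}-classes, it is insensitive to the replacement of the strictly commuting square \wref{eqinmp} by the homotopy-commuting \wref[;]{eqhinmapo} the single \w{1}-simplex \w{\tau} still exhibits the two classes as equal, whence \w{\gamma=\gamma'} in \w[.]{[B,\qY{\lambda}]}

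I expect the main obstacle to be exactly this last point. In the rigid proof the diagram \wref{eqinmp} commutes on the nose, whereas in the lax setting only the homotopy-commuting \wref{eqhinmapo} is available, so the work lies in verifying that the identifications of the faces \w{d_{0}} and \w{d_{1}} of the relevant \w{1}-simplices under \w[,]{\eta_{\fX}} \w[,]{\fMAe\var} and \w{\fMAe\qp{\alpha+1}} survive passage to homotopy \wh that is, that the homotopies furnished by the lax structure and by \wref{eqhexp} can be glued into a genuine \w{1}-simplex of \w{\fMAe\qY{\alpha+1}\lin{B}} with the prescribed endpoints. The structural reason the adaptation goes through is that every conclusion we need is an equality in \w[;]{\pi_{0}} no higher coherence is required, which is precisely why the \ww{\fTAe}-algebra structure of Remark \ref{rllalg} suffices here, even though \wh lacking the analogue of Remark \ref{rptalg} \wh it does not support a strict converse.
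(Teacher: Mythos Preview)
Your proposal is correct and is exactly the approach the paper takes: the paper gives no separate proof of Theorem \ref{threal}, merely stating that ``the proof of Theorem \ref{treal} readily adapts,'' and your write-up is a careful spelling-out of that adaptation, with the key observation that every conclusion needed is a \w{\pi_{0}}-statement and hence insensitive to replacing strict commutativity by homotopy commutativity.
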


\begin{remark}\label{rsstover}
The procedure described in Section \ref{crma} for realizing a
\emph{strict} \Ama $\fX$ can be simplified when \w{\C=\Ta} for certain choices of $A$ \wh e.g., when $A$ is a wedge of (possibly localized) spheres. In this case 
we can use the Stover resolution for $\fX$, obtained by iterating the monad \w{\fTAd} and applying \w{\tLA} to the resulting simplicial \Ama \w[,]{\fVd}
with the missing face maps provided by the \ww{\fTAd}-algebra
structure \w{\var_{\fX}:\fTAd\fX\to\fX} (see \cite{BBlaC}). The ingredients allowing this simplified approach are:

\begin{enumerate}
\renewcommand{\labelenumi}{(\alph{enumi})~}
\item The fact that the loop space functor $\Omega$ commutes with the realization of simplicial spaces (which are connected CW complexes in each dimension) \wh see
\cite[Theorem 12.3]{MayG}. 
\item The fact that \w{-\otimes R} is exact when $R$ is a subring of the rationals.
\item The fact that when \w[,]{A=A_{0}\vee A_{1}} an \Ama encodes also the \ww{A_{0}}- and \ww{A_{1}}-\ma structures.
\end{enumerate}

\end{remark}

\begin{mysubsection}{Lax Stover resolutions}
\label{slsr}
The usual Stover construction is not homotopy invariant, but it has a lax version
which can be described as follows: assume that $A$ is
$\lambda$-compact, so that \w{\mapa(A,-)} commutes with sequential
colimits of length $\lambda$, and let \w{\TsA} be the smallest subset
of \w{\Obj\C} containing $A$ and closed under suspensions and
coproducts of cardinality less than $\lambda$. This suffices to define
the comonad \w{L_{A}:=\tLA\circ\rho_{0}\fMAd} on $\C$, as well as
the monad \w{\fTA:=\rho_{0}\fMAd\circ\tLA} on the category \w{\MsAd}
of discrete \Ama[s.]

Let \w{\CsA} be the full sub-simplicial category of $\C$ with objects
\w[,]{\TsA}, and \w{\hCsA=\F\pi_{0}(\CsA)} the free simplicial
resolution of its homotopy category. Given a discrete \Ama $\fX$ with
a lax \Ama structure \w{\hfX:\hCsA\to\Sk} on \w[,]{X=\fX\lin{A}}
we obtain a lax \ww{\fTA}-algebra structure
\w{\var_{\fX}:\fTA\fX\to\fX} on $\fX$, so that we have a homotopy
\begin{myeq}\label{eqhtpyalg}
H:\var_{\fX}\circ\fMAd(\var_{\tLA\rho\fX})~\xra{\sim}~
\var_{\fX}\circ \fTA(\var_{\fX})
\end{myeq}
\noindent as in \wref[.]{eqhalgebra}

If we set \w{W_{n}:=\tLA\fTA^{n}\rho\fX} for each \w[,]{n\geq 0}
the monad \w{\fTA} defines degeneracies and face maps making this into
a simplicial object over $\C$, except for the \emph{last} face map
\w[,]{d_{n}:W_{n}\to W_{n-1}} which we define to be
\w{\tLA\fTA^{n}\var_{\fX}} (for \w[).]{n\geq 0} All the simplicial
identities hold on the nose, except those involving the last face map,
which hold up to homotopy (with the homotopy induced by $H$). Thus for
example \w{d_{0}d_{1}\sim d_{0}d_{0}} is just \w{\tLA} applied to
\wref[.]{eqhtpyalg} However, in order to get higher homotopies, we
need the full lax \Ama structure on $\hfX$, which make
\w{\var_{\fX}:\fTA\fX\to\fX} into an $\infty$-$\fTA$-algebra structure
on $\fX$.  With this extra structure, \w{\Wd} becomes an
$\infty$-homotopy commutative simplicial space, which can then be
rectified to a strict simplicial space \w[,]{\Vd} using
\cite[Theorem~IV.4.37]{BVogHI} or \cite[Theorem~2.4]{DKSmH}
Applying the Bousfield-Friedlander spectral sequence
(see \cite[Theorem~B.5]{BFrieH}), we then deduce that the geometric realization \w{\|\Vd\|} of \w{\Vd} realizes the original \Ama $\fX$.
\end{mysubsection}

%
%app         Appendix: A Dold-Lashof type approach
%
\section*{Appendix: \ A Dold-Lashof type approach}
\label{adla}
\setcounter{thm}{0}
\setcounter{section}{7}

Here we sketch an alternative approach to recovering \w{\CWA Y} from \w{\mapp(A,X)} by successive approximations in the spirit of the Dold-Lashof
``projective spaces'' \w{P^{n}\Omega Y} for loop spaces (see \cite{DLashP,MFucD,StaHH}.
Our construction is based directly on that of Dror-Farjoun for \w{\CWA Y} (in \cite[\S 2B]{DroC}).  The setting is still that of \S \ref{sass}.

\begin{mysubsection}{Mapping spaces and the monad \ww{\TA}}
\label{sms}
For fixed cofibrant \w[,]{A\in\C} we define a functor \w{\MA:\C\to\Sk} by
\w[.]{\MA X:=\mapa(A,X)} This has a left adjoint \w{\FA:\Sa\to\C} defined
\w[,]{\FA K:=A\ltimes K:=A\otimes K/A\otimes\ast} when $K$ is pointed (compare \wref[,]{eqhalfsmash} where the other half-smash was used to define $\otimes$ for \w[).]{\C=\Sa} This is actually an enriched adjunction (cf.\  \cite[\S 6.7]{BorcH2}) \wh i.e., there are natural isomorphisms of (pointed) simplicial sets
$$
\map_{\C}(\FA K, X)=\map_{\C}(A\ltimes K, X)\cong
\map_{\Sa}(K,\map_{\C}(A,X))=\map_{\Sa}(K,\MA X)~.
$$

This defines a monad \w[,]{\TA:=\MA\FA:\Sa\to\Sa} equipped
with natural transformations \w{\eta:\Id\to\TA} (adjoint to
\w[)]{\Id_{\FA}} and \w{\mu:\TA\TA\to\TA} (which is \w{\MA} applied to
the adjoint of \w[),]{\Id_{\TA}} satisfying the usual identities. A
\ww{\TA}-\emph{algebra} is a pointed simplicial set \w{X\in\Sa} equipped with
a splitting \w{\var_{X}:\TA X\to X} for \w[,]{\eta_{X}:X\to\TA X} such that:
\mydiagram[\label{eqalgeb}]{
\TA\TA X \ar[r]^{\mu_{X}} \ar[d]_{\TA(\var_{X})} & \TA X \ar[d]_{\var_{X}} \\
\TA X \ar[r]_{\var_{X}} & X
}
\noindent commutes (cf.\ \cite[\S 4.1]{BorcH2}.

Note that when \w{X=\MA Y} for some \w[,]{Y\in\C} we may take
\w{\var_{\MA Y}:\MA\FA\MA Y\to\MA Y} to be \w[,]{\MA(\widetilde{\Id_{\MA Y}})}
in which  case \wref{eqalgeb} is satisfied automatically. Thus every
$A$-mapping space is in particular an algebra over \w[.]{\TA} We wish
to find conditions for the converse to hold.
\end{mysubsection}

\begin{mysubsection}{Reconstructing the target}\label{srt}
Let $A$ be a fixed object in a pointed simplicial model category
$\C$. For any \w[,]{X=\mapp_{\C}(A,Y)} we describe a (transfinite)
inductive procedure for recovering $Y$ from $X$ (up to
$A$-equivalence), together with its \ww{\TA}-algebra structure. This
procedure is essentially Dror-Farjoun's construction of the
\ww{\CWA{}}-approximation of $Y$ (cf.\ \cite[\S 2B]{DroC}), modified
to take into account the fact that $Y$ is not actually given.

We start with \w{\qZ{0}:=\ast} as the $0$-th approximation to $Y$ in $\C$.
For any ordinal \w[,]{\alpha<\lambda} at the $\alpha$-th stage of the
induction we assume that for each \w[,]{\beta\leq\alpha} we have
constructed \w[,]{\qZ{\beta}\in\C} equipped with maps of
\ww{\TA}-algebras \w{\qf{\beta}:\MA\qZ{\beta}\to X} as well as maps
\w{\qp{\beta}:\FA X\to\qZ{\beta}} in $\C$, together with an injective map of
simplicial sets \w{\qi{\beta}:\qZ{\beta}\to\qZ{\beta+1}} such that
\begin{myeq}\label{eqexpand}
\qf{\beta+1}\circ\MA\qi{\beta}~=~\qf{\beta}\hs\text{and}\hs
\qf{\beta+1}\circ\MA\qp{\beta+1}~=~\var_{X}
\end{myeq}
\noindent for each \w[.]{\beta<\alpha}

If $\alpha$ is a limit ordinal, we let
\w[,]{\qZ{\alpha}:=\colim_{\beta<\alpha}\qZ{\beta}} with
\w[.]{\qf{\alpha}:=\colim_{\beta<\alpha}\qf{\beta}}

We now define \w{\qZ{\alpha+1}} by the homotopy pushout diagram
\mydiagram[\label{eqhpo}]{
\ar @{} [drr] |>>>>>{\framebox{\scriptsize{PO}}}
\FA\MA\qZ{\alpha}  \ar[rr]^-{\FA\qf{\alpha}} \ar[d]^{\var_{\qZ{\alpha}}} &&
\FA X \ar[d]^{\qp{\alpha+1}}\\
\qZ{\alpha}  \ar[rr]_{\qi{\alpha}} && \qZ{\alpha+1}
}
\noindent in $\C$.

Note that \emph{if} \w[,]{X=\MA Y} and we assume by induction that
\w{\qf{\alpha}:\MA\qZ{\alpha}\to\MA Y=X} is induced by
\w{\qe{\alpha}:\qZ{\alpha}\to Y} making the outer square in the
following diagram commute (up to homotopy):
\mydiagram[\label{equsehpo}]{
\FA\MA\qZ{\alpha}  \ar[rr]^-{\FA\qf{\alpha}} \ar[d]^{\var_{\qZ{\alpha}}} &&
\FA X \ar[d]_{\qp{\alpha}} \ar@/^/[ddr]^{\var_{Y}}  & \\
\qZ{\alpha}  \ar[rr]^-{\qi{\alpha}} \ar@/_/[drrr]_{\qe{\alpha}} & &
\qZ{\alpha+1} \ar@{.>}[dr]_<<{\qe{\alpha+1}} & \\
& & & Y
}
\noindent then the pushout property guarantees the existence
of map \w[,]{\qe{\alpha+1}:\qZ{\alpha+1}\to Y} unique up to homotopy,
such that \wref{eqexpand} holds for \w[.]{\qf{\alpha+1}:=\MA\qe{\alpha+1}}

Applying \w{\MA} to \wref[,]{eqhpo} we obtain a commuting square in \w[:]{\Sa}
\mydiagram[\label{eqsquare}]{
\TA\MA\qZ{\alpha}  \ar[rr]^{\TA\qf{\alpha}} \ar[d]^{\MA\var_{\qZ{\alpha}}} & &
\TA X \ar[d]^{\MA\qp{\alpha}}\\
\MA\qZ{\alpha}  \ar[rr]_{\MA\qi{\alpha}} && \MA\qZ{\alpha+1}
}
\noindent in $\C$ \wh though it is no longer homotopy cocartesian.

Nevertheless, if we assume that \w{\qf{\alpha}} is a map of \ww{\TA}-algebras,
the following square commutes:
$$
\xymatrix{
\TA\MA\qZ{\alpha}  \ar[rr]^-{\TA\qf{\alpha}} \ar[d]^{\var_{\MA Y}=
\MA\var_{\qZ{\alpha}}} &&
\TA X \ar[d]^{\var_{X}}\\
\MA\qZ{\alpha}  \ar[rr]_{\qf{\alpha}} & & X
}
$$
\noindent Thus it makes sense to look for a dotted map (of
\ww{\TA}-algebras) making the following diagram commute:
\mydiagram[\label{eqinducehpo}]{
\TA\MA\qZ{\alpha}  \ar[rr]^{\TA\qf{\alpha}}
\ar[d]_{\var_{\MA Y}=\MA\var_{\qZ{\alpha}}} &&
\TA X \ar[d]_{\qp{\alpha}} \ar@/^/[ddr]^{\var_{X}}  & \\
\MA\qZ{\alpha}  \ar[rr]^-{\MA\qi{\alpha}} \ar@/_/[drrr]_{\qf{\alpha}} &&
\MA\qZ{\alpha+1} \ar@{.>}[dr]_<<{\qf{\alpha+1}} & \\
& & & X
}
\end{mysubsection}

\begin{defn}\label{dobstr}
Given a \ww{\TA}-algebra $X$ and a sequence of objects \w{\qZ{\beta}}
equipped with maps \w[,]{\qf{\beta}} \w[,]{\qp{\beta}} and
\w{\qi{\beta}} satisfying \wref{eqexpand} for \w[,]{\beta\leq\alpha}
the \ww{(\alpha+1)}-\emph{st lifting} for this sequence is any map of
\ww{\TA}-algebras \w{\qf{\alpha+1}:\MA\qZ{\alpha+1}\to X} satisfying
\wref{eqexpand} for \w[,]{\alpha+1} where \w[,]{\qZ{\alpha+1}}
\w[,]{\qp{\alpha+1}} and \w{\qi{\alpha}} are defined by \wref[.]{eqhpo}
\end{defn}

%
%       Theorem:  Liftings guarantee realizability
%
\begin{thm}\label{tobstr}
Let \w{A\in\Sa} be a fixed simplicial set which is $\lambda$-small for \w{\lambda=\lambda(A)} (Definition \ref{dsmall}). If $X$ is a \ww{\TA}-algebra and \w{\qZ{\alpha}\in\C} \wb{\alpha\leq\lambda} is a sequence of objects
in $\C$ as above having $\alpha$-liftings
\w{\qf{\alpha}:\MA\qZ{\alpha}\to X} for each \w[,]{\alpha<\lambda}
then $X$ is realizable as an $A$-mapping space \w{X\simeq\mapp(A,Y)} for
\w[.]{Y:=\hocolim_{\alpha<\lambda}\qZ{\alpha}}
\end{thm}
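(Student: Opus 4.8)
The plan is to exhibit the assembled comparison map $\qf{\lambda}:\MA\qZ{\lambda}\to X$ as a weak equivalence of pointed simplicial sets; since the $\qi{\alpha}$ are cofibrations, $Y=\qZ{\lambda}=\hocolim_{\alpha<\lambda}\qZ{\alpha}$, so this yields $X\simeq\mapa(A,Y)$. First I would exploit that $A$ is $\lambda$-small: by \eqref{eqsmall}, applied to the sequence $(\qZ{\alpha})_{\alpha<\lambda}$ with transition maps $\qi{\alpha}$, the canonical map $\colim_{\alpha<\lambda}\MA\qZ{\alpha}\to\MA Y$ is an isomorphism. The first relation of \eqref{eqexpand}, together with $\qf{\alpha}=\colim_{\beta<\alpha}\qf{\beta}$ at limit stages, says the $\qf{\alpha}$ are compatible with the maps $\MA\qi{\alpha}$, so they assemble into $\qf{\lambda}:=\colim_{\alpha<\lambda}\qf{\alpha}:\MA Y\to X$. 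Exactly as in part (c) of the proof of Theorem \ref{treal}, smallness gives $\pi_n\MA Y\cong\colim_{\alpha<\lambda}\pi_n\MA\qZ{\alpha}$ and hence $\qf{\lambda}_{\ast}=\colim_{\alpha<\lambda}\qf{\alpha}_{\ast}$.

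Next I would produce a one-sided inverse, giving surjectivity. For each $\alpha<\lambda$ set
\[
\qs{\alpha+1}~:=~\MA\qp{\alpha+1}\circ\eta_X~:~X~\to~\MA\qZ{\alpha+1}~,
\]
where $\eta_X:X\to\TA X=\MA\FA X$ is the unit of the monad $\TA$. The second relation of \eqref{eqexpand}, namely $\qf{\alpha+1}\circ\MA\qp{\alpha+1}=\var_X$, together with the $\TA$-algebra unit axiom $\var_X\circ\eta_X=\Id_X$ from \S\ref{sms}, gives $\qf{\alpha+1}\circ\qs{\alpha+1}=\Id_X$. Composing $\qs{\alpha+1}$ with the colimit inclusion $\MA\qZ{\alpha+1}\to\MA Y$ therefore yields a section of $\qf{\lambda}$, so $\qf{\lambda}_{\ast}$ is a split surjection on every homotopy group.

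The heart of the matter is injectivity, which I would deduce from the single identity $\qs{\alpha+1}\circ\qf{\alpha}=\MA\qi{\alpha}$. Unwinding the definition of $\qs{\alpha+1}$ and using naturality of $\eta$ gives $\qs{\alpha+1}\circ\qf{\alpha}=\MA(\qp{\alpha+1}\circ\FA\qf{\alpha})\circ\eta_{\MA\qZ{\alpha}}$; commutativity of the pushout \eqref{eqhpo} replaces $\qp{\alpha+1}\circ\FA\qf{\alpha}$ by $\qi{\alpha}\circ\var_{\qZ{\alpha}}$; and since $\var_{\qZ{\alpha}}$ is the counit of the adjunction $\FA\dashv\MA$, the triangle identity $\MA\var_{\qZ{\alpha}}\circ\eta_{\MA\qZ{\alpha}}=\Id$ collapses the expression to
\[
\qs{\alpha+1}\circ\qf{\alpha}~=~\MA\qi{\alpha}\circ\MA\var_{\qZ{\alpha}}\circ\eta_{\MA\qZ{\alpha}}~=~\MA\qi{\alpha}~.
\]
Granting this, suppose $x\in\pi_n\MA Y$ satisfies $\qf{\lambda}_{\ast}(x)=0$. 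By smallness $x$ is the image of some $x_\alpha\in\pi_n\MA\qZ{\alpha}$, and then $\qf{\alpha}_{\ast}(x_\alpha)=\qf{\lambda}_{\ast}(x)=0$, whence $(\MA\qi{\alpha})_{\ast}(x_\alpha)=(\qs{\alpha+1})_{\ast}(\qf{\alpha}_{\ast}(x_\alpha))=0$, so $x=0$ in the colimit $\pi_n\MA Y$. Thus $\qf{\lambda}_{\ast}$ is injective as well; together with the previous step it is an isomorphism on all homotopy groups, so $\qf{\lambda}$ is a weak equivalence and $X\simeq\mapa(A,Y)$. (Conceptually, the two displayed identities exhibit the direct system $(\MA\qZ{\alpha},\MA\qi{\alpha})$ as interleaved with the constant system $X$ via $\qf{\alpha}$ and $\qs{\alpha+1}$, so a standard telescope zig-zag forces $\colim_\alpha\MA\qZ{\alpha}\simeq X$.)

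I expect the main obstacle to be bookkeeping rather than any hard estimate. One must carefully distinguish the two maps written $\var$ — the $\TA$-algebra structure $\var_X:\TA X\to X$ used for the section, and the adjunction counit $\var_{\qZ{\alpha}}:\FA\MA\qZ{\alpha}\to\qZ{\alpha}$ used for the key identity — since the whole argument hinges on playing the algebra axiom against the triangle identity. The other delicate point is that every conclusion about the colimit $\MA Y$ must be derivable from stage-$\alpha$ data, so the proof relies essentially on $\lambda$-smallness (Definition \ref{dsmall}) to guarantee both that $\MA$ commutes with the colimit and that $\pi_n$ does, exactly the hypothesis that localizes the otherwise global statement.
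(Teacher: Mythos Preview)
Your argument is correct and is precisely the small-object argument the paper invokes by citing \cite[\S 2, Proposition B.1]{DroC}; indeed it is the streamlined analogue of the proof of Theorem \ref{treal}, with the adjunction \w{\FA\dashv\MA} replacing the Stover machinery. The paper does not spell out these details, so your write-up supplies what the citation omits.

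One small point of phrasing: your injectivity step is stated for kernels (``if \w{\qf{\lambda}_{\ast}(x)=0} then \w[''),]{x=0} which is fine for \w{\pi_{n}} with \w[,]{n\geq 1} but for \w{\pi_{0}} (a pointed set, not a group) you should instead argue directly that \w{\qf{\lambda}_{\ast}(x)=\qf{\lambda}_{\ast}(x')} implies \w[.]{x=x'} The same identity \w{\qs{\alpha+1}\circ\qf{\alpha}=\MA\qi{\alpha}} handles this case verbatim: represent \w{x,x'} at a common stage $\alpha$, apply \w[,]{(\qs{\alpha+1})_{\ast}} and conclude \w[,]{(\MA\qi{\alpha})_{\ast}(x_{\alpha})=(\MA\qi{\alpha})_{\ast}(x'_{\alpha})} so \w{x=x'} in the colimit. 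Your closing ``interleaving'' remark is exactly the right picture: the two identities exhibit \w{(\MA\qZ{\alpha})} and the constant system $X$ as pro-isomorphic, which is the conceptual core of Bousfield's argument.
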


\begin{proof}
See \cite[\S 2 Proposition B.1]{DroC}, which is based on Bousfield's small object argument (cf.\ \cite{BousL}).
\end{proof}

\end{document}